\numberwithin{equation}{section}
\newtheorem{lemma}{Lemma}[section]
\newtheorem{theorem}[lemma]{Theorem}
\newtheorem{proposition}[lemma]{Proposition}
\newtheorem{corollary}[lemma]{Corollary}
\theoremstyle{definition}
\newtheorem{remark}[lemma]{Remark}
\newcommand{\Gal}{\mathrm{Gal}}
\newcommand{\Hol}{\mathrm{Hol}}
\newcommand{\Aut}{\mathrm{Aut}}
\newcommand{\Perm}{\mathrm{Perm}}
\newcommand{\End}{\mathrm{End}}
\newcommand{\id}{\mathrm{id}}
\newcommand{\be}{\mathbf{e}}
\newcommand{\ff}{\mathbf{f}}
\newcommand{\bv}{\mathbf{v}}
\newcommand{\bw}{\mathbf{w}}
\newcommand{\zero}{\mathbf{0}}
\newcommand{\Z}{\mathbb{Z}}
\newcommand{\F}{\mathbb{F}}
\begin{document}
\title[Hopf-Galois Structures and Sophie Germain Primes]
{Hopf-Galois Structures on Non-Normal Extensions of Degree Related to Sophie Germain Primes}

\author{Nigel P.~Byott}
\address{Department of Mathematics, College of Engineering,
  Mathematics and Physical Sciences, University of Exeter, Exeter 
EX4 4QF U.K.}  
\email{N.P.Byott@exeter.ac.uk}

\author{Isabel Martin-Lyons}

%\address{(I.~Martin-Lyons)Department of Mathematics, College of Engineering,
%  Mathematics and Physical Sciences, University of Exeter, Exeter 
%EX4 4QF U.K.}  
\email{imartin.lyons@gmail.com}

\date{\today}
\subjclass[2020]{12F10, 16T05} 
\keywords{Hopf-Galois structures; field extensions; groups of squarefree
order; Sophie Germain primes}

\bibliographystyle{amsalpha}

\begin{abstract} 
We consider Hopf-Galois structures on separable (but not necessarily
normal) field extensions $L/K$ of squarefree degree $n$. If $E/K$ is
the normal closure of $L/K$ then $G=\Gal(E/K)$ can be viewed as a
permutation group of degree $n$.  We show that $G$ has derived length
at most $4$, but that many permutation groups of squarefree degree and
of derived length $2$ cannot occur.    
We then investigate in detail the case where $n=pq$ where 
$q \geq 3$ and $p=2q+1$ are both prime. (Thus $q$ is a Sophie Germain prime and $p$ is a
safeprime). We list the permutation groups $G$ which can arise, and
we enumerate the Hopf-Galois structures for each $G$. There are six such $G$ for which the corresponding field extensions $L/K$ admit Hopf-Galois structures of both possible types. 
\end{abstract}

\maketitle

\section{Introduction}

Half a century ago, Chase and Sweedler \cite{CS} initiated Hopf-Galois theory, in part
motivated by the study of inseparable field extensions. Their work
nevertheless raises many interesting questions for separable
extensions.  Greither and Pareigis \cite{GP} showed that a separable
extension may admit many Hopf-Galois structures, and that these can be
described in group-theoretic terms.

Suppose first that $L/K$ is a field extension of degree $n$ which is
normal as well as separable, so $L/K$ is a Galois extension in the
classical sense. Let $G=\Gal(L/K)$ be its Galois group. Then $L/K$
admits at least one Hopf-Galois structure, namely that given by the
natural action of the group algebra $K[G]$. There may however be many
other Hopf-Galois structures on $L/K$. If $H$ is the $K$-Hopf algebra
acting on $L$ in one of these Hopf-Galois structures, then, for some 
group $N$ of order $n$, we have $L \otimes_K H \cong
L[N]$ as $L$-Hopf algebras. We call
the isomorphism class of $N$ the {\em type} of this Hopf-Galois
structure. The possible types, and the number of Hopf-Galois
structures of each type, depend only on the Galois group $G$.

More generally, suppose now that $L/K$ is a separable (but not
necessarily normal) extension of degree $n$. Let $E/K$ be the normal
closure of $L/K$, and let $G=\Gal(E/K)$ and $G'=\Gal(E/L)$.  Again,
each Hopf-Galois structure on $L/K$ has a type $N$, which is (the isomorphism
class of) a group of order $n$. The number of Hopf-Galois
structures of each type depends only on $G$ together with its subgroup
$G'$. Indeed, we may interpret the main result of \cite{GP} as saying
that the Hopf-Galois structures on $L/K$ depend on the isomorphism
class of $G$ as a permutation group acting transitively on the left
coset space $X=G/G'$ (or, equivalently, acting on the set of
$K$-linear embeddings of $L$ into $E$).

There is a substantial literature studying Hopf-Galois structures on
various classes of separable field extensions; see for example
\cite{kohl,Ch05,Tsang_AS} and the references therein. While some
of this work, such as \cite{Ch89, CRV, Crespo-Salguero}, treats
non-normal extensions, most of it is concerned with Galois
extensions. The study of Hopf-Galois structures on Galois extensions
has recently become of wider interest due to a connection with (skew)
braces and hence with set-theoretic solutions of the Yang-Baxter
equation (see \cite{bachiller} and the appendix to \cite{SV}).

When $n=pq$ is the product of two distinct primes, the Hopf-Galois
structures on a Galois extension of degree $n$ were determined in
\cite{pq}. Provided that $p \equiv 1 \pmod{q}$, there are two groups
of order $pq$, one cyclic and one non-abelian. A
Galois extension with either Galois group admits Hopf-Galois
structures of both types. Another case which has been investigated 
in several papers \cite{Ch03, BC, kohl13} is when $n=2pq$ for odd
primes $p$, $q$ with $p=2q+1$. (Primes $q$ such that $2q+1$ is also prime 
are named after Sophie Germain in recognition of her work on Fermat's Last Theorem
for these exponents, while primes $p$ such that $(p-1)/2$ is also prime are called 
safeprimes because of their significance in cryptography.) 
More generally, the Hopf-Galois structures
on a Galois extension of arbitrary squarefree degree $n$ were
investigated in \cite{AB-gal}. In this case, the groups of order $n$
can be classified, and given any two such groups $G$, $N$, one can
calculate the number of Hopf-Galois structures of type $N$ on a Galois
extension $L/K$ with $\Gal(L/K) \cong G$.

The question then arises of whether the results of \cite{AB-gal} can
be extended to non-normal (but separable) field extensions $L/K$ of
square-free degree $n$. The purpose of this paper is to take some
initial steps in this direction. In this situation, the type $N$ of a Hopf-Galois structure
is again a group of squarefree order $n$.
The group $G=\Gal(E/K)$, however, will have order a proper multiple of $n$ as soon as $L/K$ is not normal, and we cannot expect this order to be squarefree. 
We are nevertheless able to obtain a few general results. 
A necessary condition for $L/K$ to admit any
Hopf-Galois structures is that $G$ has derived length at most $4$
(Theorem \ref{dl4}), but this condition is very far
from sufficient (Theorem \ref{comp-dl2}).

If we wish to determine all possible Hopf-Galois structures
on extensions of a given squarefree degree $n$, we cannot proceed as in \cite{AB-gal} and start with
an arbitrary pair of groups $G$, $N$. This is because we have no {\em a priori} classification of 
permutation groups of degree $n$, and therefore we cannot specify in advance the groups $G$ we need to consider. 
A possible alternative strategy is take each group $N$ of order $n$ in
turn, and find the permutation groups $G$ which can arise from
Hopf-Galois structures of type $N$. We then have the new problem of
determining when two such groups $G_1$ and $G_2$, arising from
(possibly different) types $N_1$ and $N_2$, are isomorphic as
permutation groups. If this occurs then field extensions realising
this permutation group will admit Hopf-Galois structures of type
$N_1$ and of type $N_2$. 

It is not clear whether one can expect the above
strategy to be viable for an arbitrary squarefree $n$. Our main goal
in this paper is to carry out this strategy in the 
special case that $n=pq$ where $q \geq 3$ is a Sophie Germain prime and 
$p=2q+1$ is the associated safeprime. As there are conjecturally infinitely many
Sophie Germain primes, we expect our results to hold for infinitely
many values of $n$. For such $n$, we obtain a catalogue of the permutation
groups of degree $n$ for which the corresponding field extensions
$L/K$ admit at least one Hopf-Galois structure. We then enumerate all
Hopf-Galois structures on these extensions $L/K$, and determine 
which permutation groups $G$ are realised by Hopf-Galois structures of both
possible types. (Recall that we have two group $N$ of order $pq$, up to isomorphism.) There are six such $G$, including the two regular groups of degree $pq$, as
 shown in Table \ref{both-types}. Our catalogue by no means contains all permutation groups of 
degree $n$: those that do not occur are precisely those which cannot be realised by a
Hopf-Galois structure. 

\section{Preliminaries}

In this section, we recall some definitions and standard facts about
Hopf-Galois structures, emphasising the connection to permutation
groups. In particular, we outline the method of counting Hopf-Galois structures
developed in \cite{unique} . We also prove some technical results
which will be useful later. For a more complete account of the theory
of Hopf-Galois structures, we refer the reader to \cite[Chapter
  2]{Ch00}.

By a permutation group we mean a group $G$ together with an injective
homomorphism $\pi:G \to \Perm(X)$ into the group of
permutations of a set $X$. We will also say that $G$ acts on $X$, and,
for $g \in G$, $x \in X$, we write $g \cdot x $ for $\pi(g)(x)$. In
this paper $X$ (and hence $G$) will always be finite. The degree of
$G$ is the cardinality $|X|$ of $X$. We say that $G$ is transitive
(respectively, regular) on $X$ if, for any $x$, $y \in X$, there is
some $g \in G$ (respectively, a unique $g \in G$) with $g \cdot x
=y$. Throughout this paper, all permutation groups will be assumed transitive.

The stabiliser of $x \in X$ is by definition the subgroup $G_x=\{g \in
G: g \cdot x=x\}$. Then the stabiliser of $g \cdot x$ is $g G_x
g^{-1}$. As $G$ acts transitively on $X$ and embeds in $\Perm(X)$,
it follows that the core $\cap_{g \in G} g G_x g^{-1}$ of $G_x$ in $G$
is trivial.  Moreover $G$, with its given action on $X$, is isomorphic
as a permutation group to $G$ acting on the set of left cosets $G/G_x=\{g G_x : g \in G\}$
via its left multiplication action $\lambda: G \to \Perm(G/G_x)$, where
$\lambda(g)(hG_x)=(gh)G_x$. Thus, up to
isomorphism, a permutation group of degree $n$ can be taken to be an
abstract group $G$ acting by left multiplication on $G/G'$, where $G'$ is a subgroup of index
$n$ with trivial core. We define
$$ \Aut(G,G')=\{ \theta \in \Aut(G) : \theta(G')=G' \}. $$
Then $\Aut(G,G')$ is the group of automorphisms $\theta$ of $G$ as a
permutation group satistfying the further condition that $\theta$ fixes the distinguished element $1_G G'$ of $G/G'$,
where $1_G$ denotes the identity element of $G$. 

Now let $L/K$ be a separable field extension of finite degree $n$,
and let $E$ be its normal closure inside a fixed algebraic closure
$K^c$ of $K$.  Let $G=\Gal(E/K)$ and $G'=\Gal(E/L)$. Then $G$ acts
transitively on the $K$-linear embeddings of $L$ into $E$ (or into
$K^c$), and the stabiliser of the inclusion $L \hookrightarrow E$ is
$G'$. Thus associated to $L/K$ we have the permutation group $G$ of degree $n$ acting on
the set $X=G/G'$. 

Next let $H$ be a cocommutative $K$-Hopf algebra. We say that $L$ is an
$H$-module algebra if $H$ acts on $L$ as $K$-linear endomorphisms such
that $h \cdot(xy)=\sum_{(h)} (h_{(1)} \cdot x) (h_{(2)} \cdot y)$ for
$h \in H$ and $x$, $y \in L$, and $h \cdot k = \epsilon(h)k$ for $h
\in H$ and $k \in K$. Here we write the comultiplication $H \to H
\otimes_K H$ as $h \mapsto \sum_{(h)} h_{(1)} \otimes h_{(2)}$, and
$\epsilon:H \to K$ is the augmentation of $K$. Moreover, we say
that $L/K$ is $H$-Galois if in addition the $K$-linear map $\theta: L
\otimes_K H \to \End_K(L)$ given by $\theta(x \otimes h)(y) = x(h
\cdot y)$ is bijective. A Hopf-Galois structure on $L/K$ consists of a
cocommutative Hopf algebra $H$ and an action of $H$ on $L$ for which
$L/K$ is $H$-Galois. 

Greither and Paregis showed that the Hopf-Galois structures on $L/K$
depend only on the permutation group $G$. More precisely, the
Hopf-Galois structures correspond bijectively to the regular subgroups
$N$ of $\Perm(X)$ which are normalised by the image $\lambda(G)$ of 
left multiplication. 
For each such $N$, we have the $K$-Hopf
algebra $H=E[N]^G$, where $G$ acts on $E[G]$ simultaneously as field
automorphisms of $E$ and by conjugation (via $\lambda$) on $N$, and where
$H$ acts on $L$ by Galois descent.  We refer to the isomorphism type
of $N$ as the {\em type} of this Hopf-Galois structure. We say that
$L/K$ is {\em almost classically Galois} if the subgroup $G'$ of $G$ has a
normal complement $C$. In that case, we obtain a Hopf-Galois structure
by taking $N=C$.

Let $G$ be a permutation group, and let $G'$ be the stabiliser of a
point. We say that a separable field extension $L/K$ {\em realises} $G$ if
there is an isomorphism $\theta:G \to \Gal(E/K)$ with
$\theta(G')=\Gal(E/L)$, where again $E$ is the normal closure of
$L/K$. Moreover, we will say that $G$ is realised by a Hopf-Galois
structure of type $N$ if $L/K$ admits a Hopf-Galois structure of type
$N$. 

Given also an abstract group $N$ of order $n$, we would like to determine
the number $e(G,N)$ of Hopf-Galois structures of type $N$ on a field
extension $L/K$ which realises $G$. By the result of Greither and Pareigis,
this is the number of regular subgroups $N^*$ of $\Perm(X)$ which are
isomorphic to $N$ and normalised by $\lambda(G)$, where $X=G/G'$. The following
result simplifies the calculation of $e(G,N)$.

\begin{lemma} \cite{unique} \label{count-formula}
Let $G$, $G'$ and $N$ be as above. Let $\Hol(N)=N
\rtimes \Aut(N)$ be the holomorph of $N$, and let $e'(G,N)$ be the
number of subgroups $M$ of $\Hol(N)$ which are transitive on $N$ and isomorphic
to $G$ via an isomorphism taking the stabiliser $M'$ of $1_N$ in $M$ to $G'$. Then
$$ e(G,N) = \frac{|\Aut(G,G')|}{|\Aut(N)|} \; e'(G,N). $$
In particular, if $G$ is realised by a Hopf-Galois structure of type $N$, then
$G$ is isomorphic to a transitive subgroup $M$ of $\Hol(N)$. 
\end{lemma}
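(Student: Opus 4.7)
The plan is to follow the standard Byott translation, recasting $e(G,N)$ (which by Greither--Pareigis counts regular subgroups $N^* \leq \Perm(X)$ isomorphic to $N$ and normalised by $\lambda(G)$, with $X=G/G'$) in terms of subgroups of $\Hol(N)$. Working with labelled copies, the quantity $|\Aut(N)|\cdot e(G,N)$ equals the number of injective homomorphisms $\beta: N \hookrightarrow \Perm(X)$ whose image is regular and normalised by $\lambda(G)$, since the fibre of $\beta \mapsto \beta(N)$ has cardinality $|\Aut(N)|$.

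Given such a $\beta$, the orbit map $n \mapsto \beta(n)\cdot x_0$, with $x_0 = 1_G G'$, is a bijection $N \to X$ sending $1_N$ to $x_0$ and intertwining $\beta$ with the left regular representation $\lambda_N$ of $N$ on itself. Conjugation by this bijection yields an isomorphism $c_\beta: \Perm(X) \to \Perm(N)$ carrying $\beta(N)$ onto $\lambda_N(N)$, so $M_\beta := c_\beta(\lambda(G))$ lies in the normaliser $\Hol(N)$ of $\lambda_N(N)$ in $\Perm(N)$. A direct computation then shows that $M_\beta$ is transitive on $N$, that the stabiliser $M_\beta'$ of $1_N$ is $c_\beta(\lambda(G'))$, and that $\psi_\beta := c_\beta \circ \lambda : G \to M_\beta$ is an isomorphism taking $G'$ to $M_\beta'$.

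Conversely, given a pair $(M, \psi)$ with $M$ a transitive subgroup of $\Hol(N)$ and $\psi: G \to M$ an isomorphism carrying $G'$ to the stabiliser $M'$ of $1_N$, one defines a bijection $\sigma: X \to N$ by $\sigma(gG') = \psi(g) \cdot 1_N$ (well defined precisely because $\psi(G') = M'$) and sets $\beta_{(M,\psi)}(n) = \sigma^{-1} \lambda_N(n) \sigma$. One checks that these two assignments are mutually inverse, giving a bijection between the regular embeddings $\beta$ and the pairs $(M, \psi)$. For each of the $e'(G,N)$ choices of $M$, the admissible $\psi$ are in bijection with $\Aut(G,G')$, since any two such isomorphisms differ by precomposition with an automorphism of $G$ preserving $G'$. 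Hence the pairs number $|\Aut(G,G')|\cdot e'(G,N)$, and equating with $|\Aut(N)|\cdot e(G,N)$ produces the formula. The ``in particular'' clause follows immediately: $e(G,N) > 0$ forces $e'(G,N) > 0$, giving a transitive subgroup of $\Hol(N)$ isomorphic to $G$.

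The argument has no genuine obstacle beyond careful bookkeeping; the only delicate points are verifying that the stabiliser condition $\psi(G') = M'$ is preserved by both directions of the correspondence and that transitivity of $\lambda(G)$ on $X$ translates cleanly into transitivity of $M$ on $N$. Both are routine once the identifications $\sigma$ and $c_\beta$ are set up correctly.
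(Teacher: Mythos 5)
Your argument is correct and is precisely the standard Byott translation: the paper itself gives no proof of this lemma, citing \cite{unique}, and the proof there proceeds exactly as you do, by passing from regular subgroups to regular embeddings $\beta:N\hookrightarrow\Perm(X)$ (picking up the factor $|\Aut(N)|$), conjugating by the orbit bijection to land in the normaliser $\Hol(N)$ of $\lambda_N(N)$, and counting the admissible isomorphisms $\psi$ for each $M$ by $|\Aut(G,G')|$. No discrepancies to report.
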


The advantage of Lemma \ref{count-formula} is that it allows us to work with $\Hol(N)$
rather than the much larger group $\Perm(X)$. 

We will write elements of $\Hol(N)=N \rtimes \Aut(N)$ as
$[\eta,\alpha]$ with $\eta \in N$ and $\alpha \in \Aut(N)$. Then the
action of $\Hol(N)$ as permutations of $N$ is given by $[\eta, \alpha]
\cdot \mu = \eta \alpha(\mu)$.  Thus the normal subgroup $N$ of $\Hol(N)$ is
identified with the group $\lambda(N)$ of left translations by $N$,
the subgroup $\Aut(N)$ is the stabiliser of 
$1_N$, and the group operation in $\Hol(N)$ is
$$ [\eta, \alpha] [\mu, \beta] =[\eta \alpha(\mu), \alpha \beta].  $$
To lighten notation, we will often write the elements $[\eta, \id_N]$
and $[1_N, \alpha]$ in $\Hol(N)$ as $\eta$, $\alpha$
respectively. Thus, for example, we have the identity
$\alpha \eta = \alpha(\eta) \alpha$
in $\Hol(N)$.

We end this section with some technical results concerning holomorphs.

\begin{proposition} \label{ab-aut}
Let $N$ be an abelian group such that $\Aut(N)$ is also abelian, and let
$A$, $A'$ be subgroups of $\Aut(N)$. Consider the subgroups $M=N
\rtimes A$ and $M'=N \rtimes A'$ of $\Hol(N)$. If
there is an isomorphism $\phi: M \to M'$ with
$\phi(N)=N$, then $M=M'$.
\end{proposition}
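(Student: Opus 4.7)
The plan is to pin down, for each $a \in A$, exactly what $\phi(a) \in M'$ must look like, by exploiting the functoriality of conjugation under $\phi$.

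First I would observe that since $\phi(N)=N$, the restriction of $\phi$ to $N$ is an automorphism $\alpha_0 \in \Aut(N)$. For any $a \in A \subseteq \Aut(N) \subseteq \Hol(N)$, write $\phi(a)=[n_a,a']$ with $n_a \in N$ and $a' \in A'$. Using the product rule $[\eta,\alpha][\mu,\beta]=[\eta\alpha(\mu),\alpha\beta]$ in $\Hol(N)$, together with the fact that $N$ is abelian, a short direct computation shows that conjugation by $[n_a,a']$ on the subgroup $N$ of $\Hol(N)$ is simply the action of $a'$: the $n_a$-factor drops out because $N$ is abelian. Similarly, conjugation by $a$ on $N$ inside $M$ is the action of $a$ itself.

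Next I would use the identity $\phi(a\eta a^{-1})=\phi(a)\phi(\eta)\phi(a)^{-1}$ for $\eta \in N$. The left-hand side equals $\alpha_0(a(\eta))$, while by the previous paragraph the right-hand side equals $a'(\alpha_0(\eta))$. Thus $\alpha_0 \circ a = a' \circ \alpha_0$ as elements of $\Aut(N)$, which gives $a' = \alpha_0 \, a \, \alpha_0^{-1}$. Here is where the hypothesis that $\Aut(N)$ is abelian becomes essential: it forces $a'=a$, so $\phi(a)=[n_a,a]$ with $a \in A'$.

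From this, $A \subseteq A'$ is immediate, so $M = N \rtimes A \subseteq N \rtimes A' = M'$. Applying the same argument to the isomorphism $\phi^{-1}:M' \to M$ (whose restriction to $N$ is $\alpha_0^{-1} \in \Aut(N)$, still satisfying $\phi^{-1}(N)=N$) yields the reverse inclusion $A' \subseteq A$, whence $M=M'$. The only step requiring any care is the computation of the conjugation action, where one must use the abelianness of $N$ to kill the $n_a$-contribution; the rest of the argument is formal. I do not anticipate a significant obstacle, as both hypotheses ($N$ abelian and $\Aut(N)$ abelian) are used exactly once and in a transparent way.
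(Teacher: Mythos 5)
Your proof is correct and follows essentially the same route as the paper's: both restrict $\phi$ to an automorphism of $N$, compute that conjugation of $N$ by $\phi(a)=[n_a,a']$ acts as $a'$ (using that $N$ is abelian to kill the translation part), deduce $a'=\alpha_0 a\alpha_0^{-1}=a$ from the commutativity of $\Aut(N)$, and finish by symmetry with $\phi^{-1}$. No gaps.
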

\begin{proof}
Let $\phi_N \in \Aut(N)$ be the restriction of $\phi$ to $N$. For $g
\in \Hol(N)$, let $C_g \in \Aut(N)$ be conjugation by $g$. Then
$C_\alpha=\alpha$ for $\alpha \in \Aut(N)$ by definition of the 
multiplication in $\Hol(N)$, and $C_\eta=\id_N$ for $\eta \in N$ since $N$ is
abelian.

Let $\alpha \in A$. Then $\phi(\alpha)=\eta \alpha'$ for some $\eta
\in N$ and $\alpha' \in A'$.  We claim that $\alpha'=\alpha$. This will show that
$M' \subseteq M$, and the same argument applied to $\phi^{-1}$ then gives equality.   

To prove the claim, let $\mu \in N$ and apply $\phi_N$ to the relation 
$C_\alpha(\mu) = \alpha \mu \alpha^{-1}$. This gives
$$ \phi_N(C_\alpha(\mu)) = \phi_N(\alpha) \phi_N(\mu)
\phi_N(\alpha)^{-1} = C_{\phi_N(\alpha)}(\phi_N(\mu)). $$ 
Hence in $\Aut(N)$ we have the equation $\phi_N C_\alpha =
C_{\phi_N(\alpha)} \phi_N$. Since $\Aut(N)$ is abelian, we conclude
that $C_\alpha=C_{\phi_N(\alpha)}$, and so
$$ \alpha= C_\alpha = C_{\phi_N(\alpha)} = C_{\eta}
C_{\alpha'}=C_{\alpha'}= \alpha'. $$ 
\end{proof}

\begin{proposition}  \label{rel-aut-char}
Let $N$ be any group and $A$ a subgroup of $\Aut(N)$. Let $M$ be the
subgroup $N \rtimes A$ of $Hol(N)$, and suppose that $N$ is characteristic in
$M$. Then the group
$$ \Aut(M,A) := \{ \theta \in \Aut(M) : \theta(A)=A\}  $$ 
is isomorphic to the normaliser of $A$ in
$\Aut(N)$. In particular, if $\Aut(N)$ is abelian then $\Aut(M,A)
\cong \Aut(N)$.
\end{proposition}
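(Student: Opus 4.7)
The plan is to construct a natural map
$$ \Phi : \Aut(M,A) \longrightarrow N_{\Aut(N)}(A), \qquad \theta \mapsto \theta|_N, $$
using the hypothesis that $N$ is characteristic in $M$ (which guarantees that every $\theta \in \Aut(M,A)$ restricts to an automorphism of $N$), and to show that $\Phi$ is a group isomorphism. The ``in particular'' clause is then immediate, since when $\Aut(N)$ is abelian every subgroup is normalised by all of $\Aut(N)$.

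First I would verify that $\theta_N := \theta|_N$ normalises $A$. The key identity, which comes directly from the multiplication rule in $\Hol(N)$ recorded in the preamble, is that conjugation by $\alpha \in A$ inside $M$ acts on $N$ as the automorphism $\alpha$ itself:
$$ \alpha \mu \alpha^{-1} = \alpha(\mu) \qquad \text{for all } \mu \in N. $$
Writing $\theta(\alpha) = \alpha' \in A$ (which makes sense because $\theta(A) = A$) and applying $\theta$ to this identity yields $\theta_N \alpha \theta_N^{-1} = \alpha'$ in $\Aut(N)$, so $\theta_N A \theta_N^{-1} \subseteq A$. Applying the same argument to $\theta^{-1}$ gives the reverse inclusion, so $\theta_N \in N_{\Aut(N)}(A)$. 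That $\Phi$ is a group homomorphism is clear from $(\theta_1 \theta_2)|_N = \theta_1|_N \cdot \theta_2|_N$.

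For injectivity, if $\theta_N = \id_N$ then the relation above forces $\theta(\alpha) = \alpha$ for every $\alpha \in A$; since $M = NA$ and $\theta$ fixes both $N$ and $A$ pointwise, $\theta = \id_M$. For surjectivity, given $\sigma \in N_{\Aut(N)}(A)$, I would observe that conjugation by $\sigma$ inside the ambient holomorph $\Hol(N)$ preserves $N$ trivially and sends $A$ to $\sigma A \sigma^{-1} = A$; it therefore stabilises the subgroup $M = NA$ and defines an element of $\Aut(M,A)$ whose restriction to $N$ is $\sigma$.

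I do not anticipate a serious obstacle: the most delicate step is surjectivity, where one must exhibit a concrete lift of an arbitrary $\sigma \in N_{\Aut(N)}(A)$ to an automorphism of $M$. Realising this lift as conjugation inside $\Hol(N)$ (which is available because $M \leq \Hol(N)$) keeps the construction transparent and manifestly functorial, so the whole argument should go through cleanly.
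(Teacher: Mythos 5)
Your proposal is correct and follows essentially the same route as the paper: both hinge on applying $\theta$ to the relation $\alpha\mu\alpha^{-1}=\alpha(\mu)$ to deduce that $\theta(\alpha)=\theta_N\alpha\theta_N^{-1}$, so that $\theta$ is determined by $\theta_N$ and $\theta_N$ normalises $A$. Your realisation of the inverse map as conjugation by $\sigma$ inside $\Hol(N)$ is just a slightly slicker packaging of the paper's explicit construction $\theta_A(\alpha)=\theta'\alpha\theta'^{-1}$.
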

\begin{proof}
An element $\theta$ of $\Aut(M,A)$ is clearly determined by its
restrictions $\theta_N\in \Aut(N)$ and $\theta_A \in \Aut(A)$. 

Let $\eta \in N$, $\alpha \in A$, and write $\mu=\theta_N(\eta)$, $\beta=\theta_A(\alpha)$. 
Applying $\theta$ to the relation 
$\alpha \eta = \alpha(\eta) \alpha$ in $\Hol(N)$ we get
$\beta \mu=\theta_N(\alpha(\eta)) \beta$, so that
$$ \beta \mu \beta^{-1} = \theta_N(\alpha(\theta_N^{-1}(\mu))). $$
As this holds for all $\mu \in N$, and as $\beta \mu \beta^{-1} =\beta(\mu)$,
it follows that $\theta_A(\alpha)=\beta=\theta_N \alpha \theta_N^{-1}$ in $\Aut(N)$.
Hence $\theta_A$ is determined by $\theta_N$. Moreover, as $\theta_A(\alpha) \in A$ for all
$\alpha \in A$, it follows that $\theta_N$ normalises $A$.
Conversely, given any $\theta' \in
\Aut(N)$ which normalises $A$, we obtain an element of $\Aut(M,A)$ by
setting $\theta_N=\theta'$ and $\theta_A(\alpha) = \theta' \alpha
\theta'^{-1}$. This proves the first statement, and the second follows. 
\end{proof}

\section{Extensions of squarefree degree: some general results}

We first mention some old results. 
Childs \cite{Ch89} showed that a permutation group $G$ of prime degree $p$ is realised by a
Hopf-Galois structure if and only if it is soluble (so that $G$ is a subgroup of $C_p \rtimes C_{p-1}$).  More generally, let $n$ be a natural number satisfying the condition $\gcd(n, \varphi(n))=1$, where $\varphi$ is the Euler totient function. Then $n$ is squarefree and every group of order $n$ is cyclic. It follows from  \cite[Theorem 2]{unique} that if a permutation group of degree $n$ is realised 
by a Hopf-Galois structure on a field extension $L/K$, then this is the unique Hopf-Galois structure admitted by $L/K$, and $L/K$ is almost classically Galois. 

We now consider more general squarefree numbers $n$. The groups of order $n$ can be 
classified since every Sylow subgroup is cyclic. 

\begin{lemma} \label{sf-gps}
Let $N$ be a group of squarefree order $n$. Then 
$$  N= \langle \sigma,  \tau : \sigma^e=\tau^d=1, \tau \sigma=\sigma^k \tau \rangle $$ 
for some parameters $e$, $d$, $k$ such that $de=n$ and $k$ has order
$d$ mod $e$. Thus $N=C_e \rtimes C_d$ is metacyclic. 

Moreover, let
$z=\gcd(e,k-1)$ and $g=e/z$. Then $\Aut(N)$ contains a normal subgroup of
order $g$ generated by $\theta: \sigma \mapsto \sigma, \tau \mapsto
\sigma^z \tau$ and a complementary subgroup isomorphic to
$\Z_e^\times$ (the group of units in the ring of integers mod $e$)
consisting of the automorphisms $\phi_s: \sigma \mapsto \sigma^s, \tau
\mapsto \tau$ for $s \in \Z_e$. These satisfy the relations $\phi_s \theta
\phi_s^{-1}=\theta^s$. Thus $\Aut(N) \cong C_g \rtimes \Z_e^\times$ is
metabelian.
\end{lemma}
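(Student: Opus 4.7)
The plan has two halves: first produce the metacyclic presentation, then pin down the automorphism group.

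For the structural part, I would invoke the classical fact that a group whose Sylow subgroups are all cyclic (a Z-group) is metacyclic: it has a cyclic normal subgroup with cyclic quotient of coprime order. Since $|N|=n$ is squarefree, every Sylow subgroup has prime order, so this applies to $N$. Choosing $\langle\sigma\rangle$ to be such a cyclic normal subgroup and $\langle\tau\rangle$ a complement (whose existence follows from Schur--Zassenhaus once $\gcd(d,e)=1$, which is automatic since $n=de$ is squarefree) yields a presentation $\tau\sigma\tau^{-1}=\sigma^k$. One then checks that $\langle\sigma\rangle$ can be chosen so that $k$ has order exactly $d$ mod $e$ -- otherwise, some power $\tau^{d'}$ with $d'<d$ commutes with $\sigma$, and reorganising the decomposition (absorbing $\langle\tau^{d'}\rangle$ into the normal factor) increases $e$ and decreases $d$ until the orders match.

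For the automorphism computation, I would argue that $\langle\sigma\rangle$ is characteristic in $N$ (it is the unique Hall $\pi$-subgroup for $\pi$ the set of primes dividing $e$). Hence any $\psi\in\Aut(N)$ is forced into the form $\psi(\sigma)=\sigma^s$, $\psi(\tau)=\sigma^t\tau^u$ with $s\in\Z_e^\times$ and $u\in\Z_d^\times$. Applying $\psi$ to the defining relation $\tau\sigma=\sigma^k\tau$ yields $k^u\equiv k\pmod e$, so $k^{u-1}\equiv 1$, and the hypothesis on the order of $k$ forces $u\equiv 1\pmod d$. Thus only $\psi(\tau)=\sigma^t\tau$ survives, and the condition $(\sigma^t\tau)^d=1$ becomes $tS\equiv 0\pmod e$, where $S=1+k+\cdots+k^{d-1}$.

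The crux, and the step I expect to be the main obstacle, is determining exactly which $t$ are allowed -- i.e.\ showing that $\gcd(S,e)=g$. Here I would use the squarefree hypothesis via CRT: because $e=zg$ is squarefree with $z=\gcd(e,k-1)$, we have $\gcd(z,g)=1$, so $\Z_e\cong\Z_z\times\Z_g$. Modulo $z$, the congruence $k\equiv 1$ gives $S\equiv d\pmod z$, while modulo $g$ the identity $(k-1)S=k^d-1\equiv 0\pmod e$ combined with $\gcd(k-1,g)=1$ gives $S\equiv 0\pmod g$. Therefore $\gcd(S,e)=g\cdot\gcd(d,z)$, and since $\gcd(d,e)=1$ by squarefreeness and $z\mid e$, this reduces to $g$. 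So the valid values of $t$ are precisely $t\in z\Z_e$, giving exactly $g$ choices.

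Finally, I would assemble the structure. Writing $\theta$ for the automorphism with $(s,t)=(1,z)$ and $\phi_s$ for $(s,0)$, the previous paragraph shows $\Aut(N)=\langle\theta\rangle\cdot\langle\phi_s:s\in\Z_e^\times\rangle$ with $\langle\theta\rangle\cap\langle\phi_s\rangle$ trivial. A direct conjugation computation in the presentation verifies $\phi_s\theta\phi_s^{-1}:\sigma\mapsto\sigma,\ \tau\mapsto\sigma^{sz}\tau$, which equals $\theta^s$; this simultaneously shows $\langle\theta\rangle\cong C_g$ is normal, exhibits the semidirect-product decomposition $\Aut(N)\cong C_g\rtimes\Z_e^\times$, and yields the stated relation $\phi_s\theta\phi_s^{-1}=\theta^s$. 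Metabelianness is immediate since the two factors are abelian.
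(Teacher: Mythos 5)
Your proof is correct. The paper itself does not argue this lemma at all: its ``proof'' consists of citations (Robinson 10.1.10 or Martin--Menezes for the presentation, and a lemma of Alabdali--Byott for $\Aut(N)$), so any self-contained derivation is by definition a different route. Yours is a sound one. The structural half (Z-group $\Rightarrow$ metacyclic, Schur--Zassenhaus for the complement, then enlarging the cyclic normal subgroup until the action of $\tau$ is faithful so that $k$ has order exactly $d$) is the standard argument, and the termination of the absorption step is clear since $d$ strictly decreases. The automorphism half is where your write-up adds genuine value over the citation: you correctly identify $\langle\sigma\rangle$ as characteristic (unique normal Hall subgroup), reduce to $\psi(\sigma)=\sigma^s$, $\psi(\tau)=\sigma^t\tau$, and the key computation $\gcd(S,e)=g$ with $S=1+k+\cdots+k^{d-1}$ is exactly right --- your CRT argument ($S\equiv d\pmod z$, $S\equiv 0\pmod g$, and $\gcd(d,z)=1$) makes explicit precisely where squarefreeness is used, which the cited sources obscure. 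Two small points to tidy in a final version: (i) you establish that every automorphism has the form $(s,t)$ with $t\in z\Z_e$, but you should also say explicitly that every such pair \emph{is} an automorphism (check that $\sigma\mapsto\sigma^s$, $\tau\mapsto\sigma^t\tau$ preserves the relations $\tau^d=1$ and $\tau\sigma\tau^{-1}=\sigma^k$; the first uses $e\mid tS$, the second is immediate), since the counting in your last paragraph needs both inclusions; (ii) the composition law $(s_1,t_1)(s_2,t_2)=(s_1s_2,\,s_1t_2+t_1)$ is worth displaying, as it makes the order of $\theta$, the normality of $\langle\theta\rangle$, and the relation $\phi_s\theta\phi_s^{-1}=\theta^s$ all one-line verifications.
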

\begin{proof}
The description of $N$ is well-known, and follows from
\cite[10.1.10]{Robinson} or \cite[Lemma 3.5]{MM}.  For the description of $\Aut(N)$, see
\cite[Lemma 4.1]{AB-cyc}.
\end{proof}

\begin{corollary} \label{hol-div}
Let $N$ be a group of squarefree order $n$, and let $p$ be the largest prime factor of $n$. Then $|\Hol(N)|$
is not divisible by $p^3$. 
\end{corollary}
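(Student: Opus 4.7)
The plan is to compute (or rather bound) the $p$-adic valuation of $|\Hol(N)|$ directly from the structural description supplied by Lemma \ref{sf-gps}. Since $|\Hol(N)|=|N|\cdot|\Aut(N)|$ and Lemma \ref{sf-gps} gives $\Aut(N)\cong C_g\rtimes\Z_e^{\times}$, we have the identity
\[
|\Hol(N)| \;=\; n\cdot g\cdot\varphi(e).
\]
I would then bound the power of $p$ dividing each of the three factors on the right separately.

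First, $n$ is squarefree, so $p$ divides $n$ exactly once. Next, $g=e/\gcd(e,k-1)$ is a divisor of $e$, and $e$ in turn divides $n$; since $n$ is squarefree, so is $g$, and therefore $p$ divides $g$ at most once. The heart of the argument — though still elementary — is to show that $p \nmid \varphi(e)$. For this I would use that $\varphi(e)=\prod_{\ell\mid e}(\ell-1)$, the product being over the (distinct) prime divisors of the squarefree integer $e$. Each such $\ell$ divides $n$, so $\ell\le p$, and hence $\ell-1<p$. Since $p$ is prime, no factor $\ell-1$ can be divisible by $p$, and consequently $p\nmid\varphi(e)$.

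Putting the three bounds together shows that $p$ divides $|\Hol(N)|$ at most $1+1+0=2$ times, whence $p^{3}\nmid|\Hol(N)|$.

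The only step requiring any care is the observation that every prime factor of $\varphi(e)$ is strictly less than $p$, but this is immediate once one notes that the primes $\ell$ appearing as divisors of the squarefree number $e$ are themselves among the prime divisors of $n$ and so are bounded above by $p$; there is no real obstacle. Everything else is just assembling the orders supplied by Lemma \ref{sf-gps}.
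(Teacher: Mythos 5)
Your proof is correct and follows essentially the same route as the paper: both start from $|\Hol(N)|=ng\varphi(e)$ supplied by Lemma \ref{sf-gps}, use that $g\mid e\mid n$ with $n$ squarefree, and observe that $p$ cannot divide the relevant Euler totient because every prime $\ell$ dividing $e$ (or $n$) satisfies $\ell-1<p$. The only cosmetic difference is that the paper bounds $|\Hol(N)|$ by the divisor $n^2\varphi(n)$ while you track the $p$-adic valuation of each factor separately, and you spell out the justification for $p\nmid\varphi(e)$ that the paper leaves implicit.
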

\begin{proof} 
With the notation of the proof of Lemma \ref{sf-gps}, we have $|\Hol(N)|=n |\Aut(N)|=ng \varphi(e)$. 
Since $\varphi(e)$ divides $\varphi(n)$ whenever $e$ divides $n$, we have that $|\Hol(N)|$ divides $n^2 \varphi(n)$.
As $p$ does not divide $\varphi(n)$, the result follows. 
\end{proof}

\begin{theorem} \label{dl4}
Let $G$ be a transitive permutation group of squarefree degree $n$
which is realised by a Hopf-Galois extension. Then $G$ has derived
length at most $4$. In particular, $G$ is soluble.
\end{theorem}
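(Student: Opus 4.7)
The plan is to combine Lemma \ref{count-formula} with the structural description of $N$ and $\Aut(N)$ in Lemma \ref{sf-gps}. By Lemma \ref{count-formula}, the hypothesis that $G$ is realised by some Hopf-Galois structure means that $G$ embeds as a (transitive) subgroup of $\Hol(N) = N \rtimes \Aut(N)$ for some group $N$ of order $n$. Since derived length is monotone under taking subgroups, it suffices to bound the derived length of $\Hol(N)$ for any group $N$ of squarefree order $n$.

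Next I would read off the derived lengths of the two factors from Lemma \ref{sf-gps}. Since $n$ is squarefree, $N = C_e \rtimes C_d$ has cyclic normal subgroup $C_e$, so $[N,N] \subseteq C_e$ and hence $N^{(2)} = 1$; that is, $N$ is metabelian. Likewise, Lemma \ref{sf-gps} gives $\Aut(N) \cong C_g \rtimes \Z_e^\times$, a semidirect product of two abelian groups, so $\Aut(N)$ is also metabelian.

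Finally, I would combine these two facts via the standard bound on derived length in an extension. Since $\Hol(N)/N \cong \Aut(N)$ satisfies $(\Hol(N)/N)^{(2)} = 1$, we conclude $\Hol(N)^{(2)} \subseteq N$, and consequently
$$ \Hol(N)^{(4)} = \bigl(\Hol(N)^{(2)}\bigr)^{(2)} \subseteq N^{(2)} = 1. $$
Thus $\Hol(N)$, and hence its subgroup $G$, has derived length at most $4$; in particular $G$ is soluble.

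There is essentially no obstacle here: once Lemmas \ref{count-formula} and \ref{sf-gps} are available, the argument reduces to a short derived-series computation on an extension of metabelian groups. The substantive content has already been packaged in those earlier results, so the theorem is really a corollary of them; the remaining work in the paper (e.g.\ Theorem \ref{comp-dl2}) is to show that this necessary condition is very far from sufficient.
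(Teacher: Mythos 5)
Your proposal is correct and follows essentially the same route as the paper: both invoke Lemma \ref{count-formula} to embed $G$ in $\Hol(N)$ and Lemma \ref{sf-gps} to see that $N$ and $\Aut(N)$ are metabelian, then bound the derived length of the extension by $2+2=4$. You have merely spelled out the derived-series step $\Hol(N)^{(4)} = (\Hol(N)^{(2)})^{(2)} \subseteq N^{(2)} = 1$ that the paper leaves implicit.
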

\begin{proof} 
Suppose that $G$ is realised by a Hopf-Galois structure of type $N$. Then, by Lemma \ref{sf-gps}, both $N$
and $\Aut(N)$ are metabelian, that is, they have derived length at most $2$. Hence $\Hol(N)=N \rtimes \Aut(N)$ 
has derived length at most $4$. Now Lemma \ref{count-formula} tells us that $G$ is 
isomorphic to a subgroup of $\Hol(N)$. Thus $G$ also has derived length at most $4$,
and is therefore soluble.
\end{proof}

We next show that this necessary condition for $G$ to be realised by a
Hopf-Galois structure is very far from sufficient.

\begin{theorem} \label{comp-dl2}
Let $n>6$ be a composite squarefree number. Then there is a
permutation group $G$ of degree $n$ with derived length $2$ which is
not realised by a Hopf-Galois extension.
\end{theorem}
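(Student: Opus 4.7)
The plan is to produce, for each such $n$, a transitive metabelian permutation group $G$ of degree $n$ with the property that $|G|$ does not divide $|\Hol(N)|$ for any group $N$ of order $n$. By Lemma \ref{count-formula}, no such $G$ can be realised by a Hopf-Galois structure. My candidates will be the wreath products $G = C_\ell \wr C_k$ with $\ell$ prime and $\ell k = n$, in their natural imprimitive actions on $n$ points. Such an action is transitive and faithful, the derived subgroup lies inside the abelian base $C_\ell^k$, and the cyclic top $C_k$ acts nontrivially on the base, so $G$ is metabelian and non-abelian --- hence of derived length exactly $2$.

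Write $n = pm$ with $p$ the largest prime factor of $n$; since $n$ is composite squarefree, $m \ge 2$. I would split into two cases depending on the size of $m$. If $m \ge 3$, I would take $G = C_p \wr C_m$, which has order $p^m m$, so $p^3 \mid |G|$. Corollary \ref{hol-div} then immediately gives $|G| \nmid |\Hol(N)|$ for every $N$ of order $n$, and this case is settled without further computation.

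The remaining possibility is $m = 2$, so $n = 2p$ with $p$ an odd prime; the hypothesis $n > 6$ forces $p \ge 5$. Here Corollary \ref{hol-div} at $p$ is useless (any transitive permutation group of degree $2p$ has $p$-part at most $p^2$), so I would instead take $G = C_2 \wr C_p$ and argue at the prime $2$. One has $|G| = 2^p \cdot p$, while the only groups of order $2p$ are $C_{2p}$ and $D_p$, whose holomorphs have orders $2p(p-1)$ and $2p^2(p-1)$ respectively (computed via Lemma \ref{sf-gps}). In both cases $v_2(|\Hol(N)|) = 1 + v_2(p-1)$, and the elementary bound $p-1 \le 2^{p-2}$ (valid for all $p \ge 3$) gives $v_2(|\Hol(N)|) \le p-1 < p = v_2(|G|)$, so $|G| \nmid |\Hol(N)|$.

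The main obstacle is precisely this second case: Corollary \ref{hol-div} disposes of the generic situation $m \ge 3$ in a single line, but when $m = 2$ one must switch to the prime $2$ and carry out a direct $2$-adic valuation estimate on the two holomorphs of order $2p$. Nothing here is deep, but it is the one step of the proof that requires looking beyond the largest prime factor of $n$.
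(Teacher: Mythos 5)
Your proposal is correct and follows essentially the same route as the paper: the same wreath-product constructions $C_p\wr C_m$ (for $m\ge 3$, killed by Corollary \ref{hol-div}) and $C_2\wr C_p$ (for $n=2p$, killed by comparing with $|\Hol(C_{2p})|$ and $|\Hol(D_{2p})|$), with your $2$-adic valuation estimate being just a restatement of the paper's inequality $2^p>2(p-1)$.
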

\begin{proof}
It follows from Lemma \ref{count-formula} that any
permutation group $G$ of degree $n$ which is realised by a Hopf-Galois structure
must embed in $\Hol(N)$ for some group $N$ of order $n$.
Let $n=pm$ where $p$ is the largest prime dividing $n$. Then, 
by Corollary \ref{hol-div}, it suffices to construct a permutation group $G$ 
of degree $n$ and derived length $2$ 
whose order is divisible by $p^3$.

We first suppose $m \geq 3$.  We construct a permutation group acting
on the set $X=C_p \times C_m$ of size $pm=n$. We write $C_p$ and $C_m$ additively.
For $0 \leq j \leq m-1$,
define $\sigma_j : X \to X$ by
$$ \sigma_j(a,b) = \begin{cases} (a+1,b) & \mbox{if }b=j \\
                 (a,b) & \mbox{otherwise.} \end{cases} $$
Then $\sigma_j$ has order $p$, and $\sigma_j$, $\sigma_k$ commute for
all $j$, $k$. Hence the $\sigma_j$ generate an elementary abelian
group $V$ of order $p^m$, which has $m$ orbits on $X$, namely the sets
$C_p \times \{j\}$ for $j \in C_m$. Next define $\tau$ by
$$ \tau(a,b) = \tau(a, b+1). $$
Then $\tau$ has order $m$ and $\tau \sigma_j
\tau^{-1}=\sigma_{j+1}$. We set $G=\langle V,\tau \rangle = V \rtimes
C_m$. Then $G$ acts transitively on $X$. As $G$ is non-abelian, but has an abelian normal subgroup $V$
with $G/V \cong C_m$, we see that $G$ has derived length $2$. (In fact, $G$ is the wreath product $C_p \wr
C_m$.) Also $|G|=p^m m$ with $m \geq 3$. 

Finally, suppose that $m=2$, so that $n=2p$ for some odd prime $p$. We
interchange the roles of $p$ and $m$ in the preceding construction to
obtain a group $G$ of order $2^p p$ of derived length $2$ acting on
$C_2 \times C_p$.  If $N$ is a group of order $2p$ then $N$ is either
cyclic or dihedral. Thus $|\Hol(N)|=2p(p-1)$ or $2p^2 (p-1)$. As
$2^p > 2(p-1)$ for $p \geq 3$, $\Hol(N)$ cannot contain a subgroup of
order $2^p p$. Again, $G$ cannot be realised as a Hopf-Galois
structure.
\end{proof}

\section{Extensions of degree $pq$ with $p=2q+1$}

For the remainder of the paper, we consider Hopf-Galois structures on separable extensions of
degree $pq$, where $p=2q+1$ and $q$ are odd primes. Thus $q$ is a
Sophie Germain prime and $p$ is a safeprime.  We write $q-1=2^r s$
with $r \geq 1$ and $s$ odd. We have $\gcd(s,2pq)=1$, but we make no
further assumptions about the prime factorisation of $s$.  

Up to isomorphism, there are two groups $N$ of order $pq$, namely the
cyclic group $C_{pq}$ and the non-abelian group $C_p \rtimes C_q$.  (We adopt the convention
that the notation $A \rtimes B$ always refers to a semidirect product taken with respect to
some faithful action of $B$ on $A$.) Thus we must determine the transitive subgroups of $\Hol(C_{pq})$ and $\Hol(C_p \rtimes C_q)$. 

\subsection{Cyclic case}

Let $N$ be a cyclic group of order $pq$. We work with the presentation
$$  N= \langle \sigma, \tau : \sigma^p=\tau^q=1, \tau \sigma = \sigma
\tau \rangle. $$ 
As the subgroups $\langle \sigma \rangle$ and $\langle \tau \rangle$
are characteristic in $N$, we have 
$$ \Aut(N) \cong \Aut(\langle \sigma \rangle) \times \Aut(\langle \tau
\rangle) $$ 
where the factors are cyclic of order $p-1=2q$ and $q-1=2^r s$
respectively. Let $\alpha$, $\beta$ be automorphisms of $N$ of order $q$, $2$ respectively 
which fix $\tau$, and let $\gamma$, $\delta$ be automorphisms of order $2^r$, $s$
respectively fixing $\sigma$. Then
$\Aut(N)$ decomposes as the direct product $\langle \alpha \rangle
\times \langle \beta, \gamma \rangle \times \langle \delta \rangle$, where the 
factors have coprime orders $q$, $2^{r+1}$, $s$ respectively. A subgroup
of $\Aut(N)$ decomposes as a direct product of one subgroup from each
of these factors. The subgroups of $\langle \alpha \rangle$ are
$\langle \alpha \rangle$ and $\{\id\}$, whereas $\langle \delta
\rangle$ has one subgroup $\langle \delta^{s/d} \rangle$ of order $d$
for each divisor $d$ of $s$. We write $\sigma_0(s)$ for the number of
divisors of $s$.  The subgroups of $\langle \beta, \gamma \rangle$ are
as follows:
\begin{itemize}
\item[(i)] for $0 \leq c \leq r$, the group $\langle \beta, \gamma^{2^{r-c}}
  \rangle$ of order $2^{c+1}$, which is cyclic only when $c=0$;
\item[(ii)] for $0 \leq c \leq r$, the cyclic group $\langle
  \gamma^{2^{r-c}}\rangle$ of order $2^c$;
\item[(iii)] for $1 \leq c \leq r$, the cyclic group $\langle \beta
  \gamma^{2^{r-c}} \rangle$ of order $2^c$.
\end{itemize}

\begin{proposition}
For $1 \leq t \leq q-1$, let $J_t = \langle \sigma, [\tau, \alpha^t]
\rangle$. Then $J_t$ is a non-abelian regular subgroup of
$\Hol(N)$. Moreover, the transitive subgroups $G$ of $\Hol(N)$ are as
shown in Table \ref{cyclic-trans-subgroups}.
\end{proposition}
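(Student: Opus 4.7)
The proposition has two parts: showing that each $J_t$ is a non-abelian regular subgroup, and enumerating all transitive subgroups via Table \ref{cyclic-trans-subgroups}. For the first part, I would compute directly from the multiplication rule $[\eta,\phi][\mu,\psi]=[\eta\phi(\mu),\phi\psi]$ in $\Hol(N)$. Since $\alpha$ fixes $\tau$, an easy induction gives $[\tau,\alpha^t]^b=[\tau^b,\alpha^{tb}]$, so $[\tau,\alpha^t]$ has order $q$; and $[\tau,\alpha^t]\,\sigma\,[\tau,\alpha^t]^{-1}=\alpha^t(\sigma)$ is a nontrivial power of $\sigma$ for $1\le t\le q-1$, so $J_t\cong C_p\rtimes C_q$ is non-abelian. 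A typical element takes the form $[\sigma^a\tau^b,\alpha^{tb}]$ with $(a,b)\in\Z_p\times\Z_q$; the first coordinate ranges over all of $N$, giving $|J_t|=pq$ and regularity.

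For the classification, Corollary \ref{hol-div} tells us that $\langle\sigma\rangle$ is the unique (hence normal) Sylow $p$-subgroup of $\Hol(N)$, so every transitive $G$ contains $\langle\sigma\rangle$. The transitive subgroups of $\Hol(N)$ therefore correspond bijectively, via $G=\pi^{-1}(\overline G)$, to subgroups $\overline G$ of the quotient $\overline{\Hol(N)}:=\Hol(N)/\langle\sigma\rangle$ whose image in $N/\langle\sigma\rangle\cong\langle\tau\rangle$ is all of $\langle\tau\rangle$. Identifying $\overline{\Hol(N)}$ with the direct product $H\times K$, where $H=\langle\tau\rangle\rtimes\langle\gamma,\delta\rangle=\Hol(\langle\tau\rangle)$ and $K=\langle\alpha,\beta\rangle$ acts trivially on $\langle\tau\rangle$, this reduces the problem to enumerating subgroups $\overline G\le H\times K$ whose projection to $\langle\tau\rangle$ is surjective.

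The key structural observation is that the unique Sylow $q$-subgroup of $\overline{\Hol(N)}$ is $\langle\tau\rangle\times\langle\alpha\rangle\cong C_q\times C_q$; its subgroups of order $q$ projecting onto $\langle\tau\rangle$ are the $q$ lines $\langle(\tau,\alpha^t)\rangle$ for $0\le t\le q-1$, and their lifts to $\Hol(N)$ are exactly $N$ itself (for $t=0$) and the $J_t$ from the first part (for $1\le t\le q-1$). Given a choice of Sylow $q$-subgroup of $\overline G$ (either one of these lines or the full $C_q\times C_q$), the remaining data consist of subgroups of $\langle\gamma,\delta\rangle$ and $\langle\beta\rangle$ together with a Goursat-type compatibility for subgroups of the direct product $H\times K$. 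I expect the main obstacle to be the systematic bookkeeping: assembling the list from the three families (i)--(iii) of $2$-subgroups of $\langle\beta,\gamma\rangle$, the $\sigma_0(s)$ subgroups of $\langle\delta\rangle$, the two subgroups of $\langle\alpha\rangle$, and the twist parameter $t$, while verifying that distinct parameter choices give distinct subgroups of $\Hol(N)$ and that the full list matches the entries of Table \ref{cyclic-trans-subgroups}.
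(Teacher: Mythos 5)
Your treatment of the first assertion is correct and is the same as the paper's: the identities $[\tau,\alpha^t]^b=[\tau^b,\alpha^{tb}]$ and $[\tau,\alpha^t]\sigma[\tau,\alpha^t]^{-1}=\alpha^t(\sigma)$ give order, non-commutativity and regularity at once. Your reduction for the classification is also sound in outline and is essentially a quotient-by-$\langle\sigma\rangle$ version of the paper's argument: the unique Sylow $q$-subgroup $\langle\bar\tau\rangle\times\langle\alpha\rangle$ of $\Hol(N)/\langle\sigma\rangle$ has preimage exactly the paper's distinguished subgroup $H=\langle\sigma,\tau,\alpha\rangle$ of order $pq^2$, and your ``lines'' are the paper's subgroups $N$ and $J_t$ of $H$. (A small point: normality of $\langle\sigma\rangle$ in $\Hol(N)$ does not follow from Corollary \ref{hol-div}, which only bounds the $p$-part of $|\Hol(N)|$; it follows from $\langle\sigma\rangle$ being characteristic in the normal subgroup $N$.)

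The gap is that the classification itself --- the substance of the proposition --- is deferred to ``systematic bookkeeping'', and the two facts that make the bookkeeping come out as in Table \ref{cyclic-trans-subgroups} are precisely the ones you have not supplied. First, you assume that the (normal) Sylow $q$-subgroup $Q$ of a transitive $\overline{G}$ projects onto $\langle\bar\tau\rangle$, so that it is one of your $q$ lines or all of $C_q\times C_q$. This needs proof: the orbits of the normal subgroup $Q$ all have equal size, and if that size were $1$ then $Q=\langle\alpha\rangle$ would lie in the stabiliser, forcing $q^2\mid|\overline{G}|$ while $|\overline{G}|/|Q|$ is prime to $q$ --- this is where the paper's step ``$M\cap H$ must be transitive'' lives, and it is what excludes the non-regular subgroup $\langle\sigma,\alpha\rangle$ of order $pq$. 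Second, and more seriously, for $t\neq 0$ you need the normaliser computation: conjugating $[\bar\tau,\alpha^t]$ by $[\bar\mu,\phi]$ gives $[\overline{\phi(\tau)},\alpha^t]$, which lies in $\langle[\bar\tau,\alpha^t]\rangle$ only when $\phi(\tau)=\tau$, i.e.\ $\phi\in\langle\alpha,\beta\rangle$. Hence the only $q'$-part that can be adjoined to $J_t$ is $\langle\beta\rangle$, which is exactly why cases (G) and (H) contribute only $J_t$ and $J_t\rtimes\langle\beta\rangle$ rather than a family indexed by all subgroups of $\langle\beta,\gamma,\delta\rangle$; a Goursat-style count without this restriction would overshoot the table. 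Relatedly, when listing actual subgroups (not conjugacy classes) you must justify that the Schur--Zassenhaus complement to $Q$ can be taken inside $\langle\beta,\gamma,\delta\rangle$ itself rather than a conjugate of it; for the subgroups containing $N$ this is automatic, since $\bar\tau\in\overline{G}$ lets one translate any complement into $\Aut(N)$ and yields $G=N\rtimes(G\cap\Aut(N))$ directly --- which is how the paper obtains cases (A)--(F) with no Goursat analysis at all.
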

\begin{table}[h]
\centerline{ 
\begin{tabular}{|c|c|c|c|c|c|c|c|} \hline
  Key & Order & Parameters  & $\#$ groups & Group   \\ \hline 
  (A) & $2^{c+1} dpq^2$ & $0 \leq c \leq r$, $d \mid s$ &
  $(r+1)\sigma_0(s)$ & $N \rtimes \langle \alpha, \beta,
  \gamma^{2^{r-c}}, \delta^{s/d} \rangle$ \\ 
  (B) & $2^c dpq^2$ & $0 \leq c \leq r$, $d \mid s$ &
  $(r+1)\sigma_0(s)$ & $N \rtimes  \langle \alpha, \gamma^{2^{r-c}},
  \delta^{s/d} \rangle$ \\ 
  (C) & $2^c dpq^2$ & $1 \leq c \leq r$, $d \mid s$ & $r \sigma_0(s)$&
  $N \rtimes  \langle \alpha, \beta\gamma^{2^{r-c}}, \delta^{s/d}
  \rangle$ \\ 
  (D) & $2^{c+1} dpq$ & $0 \leq c \leq r$, $d \mid s$ &
  $(r+1)\sigma_0(s)$ & $N \rtimes \langle \beta, \gamma^{2^{r-c}},
  \delta^{s/d} \rangle$ \\ 
  (E) & $2^c dpq$ & $0 \leq c \leq r$, $d \mid s$ &
  $(r+1)\sigma_0(s)$ & $N \rtimes  \langle \gamma^{2^{r-c}},
  \delta^{s/d} \rangle$ \\ 
  (F) & $2^c dpq$ & $1 \leq c \leq r$, $d \mid s$ & $r \sigma_0(s)$ &
  $N \rtimes  \langle \beta\gamma^{2^{r-c}}, \delta^{s/d} \rangle$ \\ 
  (G) & $2pq$ & $1 \leq t \leq q-1$ & $q-1$ & $J_t \rtimes  \langle
  \beta \rangle$ \\ 
  (H) & $pq$ &  $1 \leq t \leq q-1$ & $q-1$ & $J_t$ \\
\hline 
\end{tabular}
}  % end centerline
\vskip3mm

\caption{Transitive subgroups for $N$ cyclic} 
 \label{cyclic-trans-subgroups}  	
\end{table}
\begin{proof}
In $J_t$ we find that $[\tau,\alpha^t]$ has order $q$ (since $\alpha$
fixes $\tau$) and $[\tau, \alpha^t]\sigma =\alpha^t(\sigma)[\tau,
  \alpha^t]$, so $J_t$ is non-abelian of order $pq$ and regular on
$N$.

As $\Hol(N)$ contains a unique subgroup  
$H = \langle \sigma, \tau, \alpha \rangle$ 
of order $pq^2$ with index $2^r s$ coprime to $pq$, and any transitive
subgroup $M$ has order divisible by $pq$, it follows that $M \cap H$
must be transitive on $N$. Thus either $M \subset H$ or $M \cap H$ is
regular on $N$. Now the subgroups of order $pq$ in $H$ are $N$, the
groups $J_t$, and one further subgroup $\langle \sigma, \alpha \rangle$
which is not regular. We then have $M \cap H=H$ or $N$ or $J_t$ for some $t$.
In particular, every transitive subgroup $M$ contains either $N$ or some $J_t$. 

Now $N$ is normal in $\Hol(N)$, so can be extended
by any subgroup of $\Aut(N)$ to give a transitive subgroup $M$.  The
normaliser of $J_t$ in $\Aut(N)$ is $\langle \alpha, \beta \rangle$
since if $\phi \in \Aut(N)$ and $\phi(\tau) \neq \tau$, we have $\phi
[\tau, \alpha^t] \phi^{-1} = [\phi(\tau), \alpha^t] \not \in J_t$.
Hence if $M$ is a transitive subgroup containing $J_t$ but not $N$ then $M=J_t$
or $J_t \rtimes \langle \beta \rangle$. The list of transitive
subgroups then follows from the description of subgroups of $\Aut(N)$
given above.
\end{proof}

The group $N$ itself occurs in Table \ref{cyclic-trans-subgroups} as case (E) with $(c,d)=(0,1)$. We observe that, in all the subgroups in Table \ref{cyclic-trans-subgroups},
the stabiliser of $1_N$ has a normal complement (either $N$ or $J_t$), so that 
all the corresponding field extensions are almost classically Galois. The abstract isomorphism types of these groups are
as shown in Table \ref{cyclic-abst}, where the groups in cases (C) and (F) have no non-trivial direct product decomposition.
We write $D_{2m}$ for the dihedral group $C_m \rtimes C_2$ of order $2m$. 

\begin{table}[h]
\centerline{ 
\begin{tabular}{|c|c|c|c|} \hline
   Key & Restrictions & Order & Structure \\ \hline
  (A) & $(c,d) \neq (0,1)$, $(1,1)$ & $2^{c+1}pq^2 d$ & $(C_p \rtimes C_{2q}) \times (C_q \rtimes C_{2^c d})$ \\
       & $(c,d)=(1,1)$ & $4pq^2$ & $(C_p \rtimes C_{2q}) \times D_{2q}$ \\    
        & $(c,d)=(0,1)$ & $2pq^2$ & $(C_p \rtimes C_{2q}) \times C_q$ \\ \hline   
  (B) & $(c,d) \neq (0,1)$, $(1,1)$ & $2^c pq^2 d$ & $(C_p \rtimes C_q) \times (C_q \rtimes C_{2^c d})$ \\
         & $(c,d)= (1,1)$ & $2pq^2$  &$(C_p \rtimes C_q) \times D_{2q}$ \\ 
        & $(c,d) = (0,1)$ & $pq^2$ & $(C_p \rtimes C_q) \times C_q$ \\ \hline
  (C) &  & $2^c pq^2 d$ & $C_{pq} \rtimes C_{2^c d q}$ \\ \hline
  (D) & $(c,d)\neq (0,1)$, $(1,1)$ & $2^{c+1} pqd$ & $D_{2p} \times (C_q \rtimes C_{2^c d})$ \\ 
          & $(c,d)=(1,1)$ & $4pq$ & $D_{2p} \times D_{2q}$ \\
        & $(c,d)=(0,1)$ & $2pq$ & $D_{2p} \times C_q$ \\ \hline
  (E) & $(c,d) \neq (0,1)$, $(1,1)$ & $2^c pqd$ & $C_p  \times (C_q \rtimes C_{2^c d})$  \\
         & $(c,d)=(1,1)$ & $2pq$ & $C_p \times D_{2q}$ \\ 
        & $(c,d)=(0,1)$ & $pq$ & $C_{pq}$ \\ \hline
  (F) & $(c,d) \neq (1,1)$ & $2^c pqd$ &$C_{pq} \rtimes C_{2^c d}$  \\
       & $(c,d)=(1,1)$ & $2pq$ & $D_{2pq}$ \\  \hline
  (G) & & $2pq$ & $C_p \rtimes C_{2q}$ \\ \hline
  (H) & & $pq$ & $C_p \rtimes C_q$ \\
\hline 
\end{tabular}
}  % end centerline
\vskip3mm

\caption{Structures of transitive subgroups for $N$ cyclic} 
 \label{cyclic-abst}  	
\end{table} 

\begin{lemma} \label{cyclic-isoms}
Of the groups in Table \ref{cyclic-trans-subgroups}, the $q-1$ groups
in case (G) are isomorphic as permutation groups, as are the $q-1$
groups in case (H).  There are no other isomorphisms, even as abstract
groups.
\end{lemma}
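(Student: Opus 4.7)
The plan has two parts. First, I will show that cases (G) and (H) each yield a single isomorphism class of permutation group by exhibiting, for any $t,t' \in \{1,\ldots,q-1\}$, an element of $\Hol(N)$ whose conjugation carries $J_t$ to $J_{t'}$. Second, I will show that no two of the remaining entries of Table \ref{cyclic-abst} are isomorphic as abstract groups, via a short list of arithmetic invariants.

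For the first part, given $m \in \Z_q^\times$ let $\gamma_m \in \Aut(\langle \tau \rangle)$ be the automorphism with $\gamma_m(\tau)=\tau^m$ and $\gamma_m(\sigma)=\sigma$. Since $\gamma_m$ and $\alpha$ lie in distinct factors of $\Aut(N) \cong \Aut(\langle \sigma \rangle) \times \Aut(\langle \tau \rangle)$, they commute in $\Hol(N)$, so conjugation by $\gamma_m$ fixes $\sigma$ and sends $[\tau,\alpha^t]$ to $[\tau^m,\alpha^t]$. The latter equals $[\tau,\alpha^{tm^{-1}}]^m$ and so generates the same cyclic subgroup as $[\tau,\alpha^{tm^{-1}}]$. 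Hence $\gamma_m J_t \gamma_m^{-1} = J_{tm^{-1}}$, and as $m$ runs through $\Z_q^\times$ the $J_t$ become mutually conjugate. Moreover $\gamma_m$ commutes with $\beta$, so the same element also conjugates $J_t \rtimes \langle\beta\rangle$ to $J_{tm^{-1}} \rtimes \langle\beta\rangle$. Because any element of $\Hol(N)$ acts by a permutation of $N$, this conjugation is automatically an isomorphism of permutation groups, establishing the claim for (G) and (H).

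For the second part, the orders displayed in Table \ref{cyclic-abst} already separate most pairs. The only coincidences of order are the triples $\bigl((A)(c-1,d),(B)(c,d),(C)(c,d)\bigr)$ of order $2^c pq^2 d$ (for $c \geq 1$), the analogous triples $\bigl((D)(c-1,d),(E)(c,d),(F)(c,d)\bigr)$ of order $2^c pqd$, and the small-order matchings at $(c,d) \in \{(0,1),(1,1)\}$ involving (G) and (H). For each coincident block I compute two invariants: $[G:C_G(C_p)]$, the size of the image of $G$ in $\Aut(C_p) \cong C_{2q}$, and $[G:C_G(T)]$, where $T$ denotes the unique normal $C_q$-subgroup (when one exists). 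A direct computation gives $[G:C_G(C_p)] = q$ in case (B) and $2q$ in cases (A) and (C), separating (B); within (A) and (C), $[G:C_G(T)]$ equals $2^{c-1}d$ and $2^c d$ respectively, differing by a factor of $2$. The analogous pair of indices handles the block $(D,E,F)$, where $[G:C_G(C_p)]$ is $1$ for (E) and $2$ for (D), (F), and where $[G:C_G(T)]$ separates (D) from (F). The remaining low-degree ambiguities are resolved by the centre or by abelianness: for instance $D_{2p} \times C_q$ has centre $C_q$ while $D_{2pq}$ has trivial centre, and $C_{pq}$ is abelian while $C_p \rtimes C_q$ is not.

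The main obstacle is simply the bookkeeping at the edge parameters $(c,d) \in \{(0,1),(1,1)\}$, where the structural formulae in Table \ref{cyclic-abst} specialise and several rows collapse to matching orders with cases (G) and (H); but the two centraliser indices above, together with the centre, suffice throughout to confirm that no further isomorphisms occur.
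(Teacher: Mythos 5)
Your proof is correct, but the second half takes a genuinely different route from the paper. For the (G)/(H) identifications you and the paper do essentially the same thing: conjugate by an automorphism of $N$ that scales $\tau$ (the paper conjugates $J_t$ directly to $J_1$ via $\phi(\tau)=\tau^t$; you conjugate $J_t$ to $J_{tm^{-1}}$), and both arguments correctly note that conjugation inside $\Perm(N)$ by an element fixing $1_N$ is automatically a permutation-group isomorphism. For the non-isomorphism claims, however, the paper argues structurally: each group in (A)--(F) is $N \rtimes A$ with $N$ characteristic (being the unique abelian subgroup of order $pq$), so Proposition \ref{ab-aut} forces any abstract isomorphism between two of them to be an equality, and (G)/(H) are excluded because they contain no abelian subgroup of order $pq$. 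You instead separate the groups by numerical invariants: the order, the index $[G:C_G(P)]$ for the normal Sylow $p$-subgroup $P$, the index $[G:C_G(T)]$ for the distinguished normal $C_q$, and the centre. I checked your computations ($2q$ vs.\ $q$ vs.\ $2q$ on the first invariant across the blocks $(A),(B),(C)$, and the factor-of-$2$ discrepancy $2^{c-1}d$ vs.\ $2^c d$ on the second; similarly for $(D),(E),(F)$, with $(G)$ separated from the order-$2pq$ block by $[G:C_G(P)]=2q$), and they are right; the one point you should make explicit is why $T=\langle\tau\rangle$ is canonically determined, i.e.\ why it is the \emph{unique} normal subgroup of order $q$ when the Sylow $q$-subgroup is $\langle\tau,\alpha\rangle\cong C_q\times C_q$ (conjugation by $\sigma$ moves every order-$q$ subgroup with nontrivial $\alpha$-component, since $\alpha$ acts on $\sigma$ by a primitive $q$-th power residue); without this, $[G:C_G(T)]$ is not obviously an isomorphism invariant. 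The trade-off: the paper's argument is shorter and leans on a general structural lemma already proved, while yours is more elementary and self-contained, at the cost of case-by-case bookkeeping that must be carried out carefully at the edge parameters.
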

\begin{proof}
Each of the groups in cases (A)--(F) has the form $N \rtimes A$ for
some subgroup $A \subseteq \Aut(N)$. By Proposition \ref{ab-aut}, no
two of these groups are isomorphic. Moreover, none of these groups can
be isomorphic to a group in case (G) or (H), since the latter groups
do not contain an abelian subgroup of order $pq$. Thus we only need
consider cases (G) and (H).

Let $1 \leq t \leq q-1$ and let $\phi \in \Aut(N)$ with
$\phi(\tau)=\tau^t$. Then $\phi [\tau, \alpha^t] \phi^{-1}
=[\phi(\tau),\alpha^t]=[\tau, \alpha]^t$. Also $\phi \beta
\phi^{-1}=\beta$. Thus conjugation by $\phi$ gives an isomorphism $J_t
\rtimes \langle \beta \rangle \to J_1 \rtimes \beta$, and this is an
isomorphism of permutation groups as it fixes the stabiliser $\langle
\beta \rangle$ of $1_N$. Moreover, it restricts to an isomorphism $J_t
\to J_1$. Hence all the groups in case (G) are isomorphic as
permutation groups, and similarly for case (H).
\end{proof}

\begin{lemma}
The numbers of Hopf-Galois structures are as in Table \ref{cyclic-HGS}.
\end{lemma}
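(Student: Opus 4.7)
My plan is to apply Lemma \ref{count-formula} to each of the eight classes (A)--(H) of transitive subgroups recorded in Table \ref{cyclic-trans-subgroups}. The formula reads
$$ e(G,N) = \frac{|\Aut(G,G')|}{|\Aut(N)|}\, e'(G,N), $$
with $|\Aut(N)| = (p-1)(q-1) = 2^{r+1}qs$, so the task reduces to evaluating $e'(G,N)$ and $|\Aut(G,G')|$ in each case. The value of $e'(G,N)$ is immediate from Lemma \ref{cyclic-isoms}: the distinct parameter choices within each of classes (A)--(F) produce pairwise non-isomorphic subgroups of $\Hol(N)$, giving $e'(G,N) = 1$, whereas in classes (G) and (H) the $q-1$ parameter choices yield isomorphic permutation groups, giving $e'(G,N) = q-1$.

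For classes (A)--(F), where $G = N \rtimes A$ with $G' = A \leq \Aut(N)$, I intend to deduce $|\Aut(G,G')| = |\Aut(N)|$ from Proposition \ref{rel-aut-char} (using that $\Aut(N)$ is abelian), once $N$ is shown to be characteristic in $G$. The subgroup $\langle\sigma\rangle$ is characteristic as the unique Sylow $p$-subgroup of $G$ (since $|A|$ is coprime to $p$), and $\langle\tau\rangle$ is characteristic as the unique normal subgroup of $G$ of order $q$: this is immediate in classes (D)--(F), where $\langle\tau\rangle$ is the unique Sylow $q$-subgroup, and in classes (A)--(C) it follows from the observation that any normal subgroup of order $q$ disjoint from $N$ would project to the unique order-$q$ subgroup $\langle\alpha\rangle$ of $G/N = A$, while a direct calculation in $\Hol(N)$ shows that no such subgroup is normalised by $\sigma$. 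With $N$ characteristic, the formula then yields $e(G,N) = 1$ in each of classes (A)--(F).

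Classes (G) and (H) require separate treatment. In class (G), $G \cong \Hol(C_p)$ (the affine group of the line over $\F_p$), which is complete for $p$ odd, so $|\Aut(G)| = |G| = 2pq$; the subgroup $G'$ is one of the $p$ pairwise conjugate involution subgroups, so $|\Aut(G,G')| = 2q$ and hence $e(G,N) = 1$. In class (H), $G \cong C_p \rtimes C_q$ acts regularly on $N$, so $G' = \{1\}$ and $|\Aut(G,G')| = |\Aut(G)|$; a direct count (the $p-1$ possible images of the order-$p$ generator together with the $p$ possible images of the order-$q$ generator) gives $|\Aut(G)| = p(p-1) = 2pq$, yielding $e(G,N) = p$. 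The main obstacle I anticipate is the identification of $\langle\tau\rangle$ as the unique normal subgroup of order $q$ in classes (A)--(C) (where $G$ has multiple Sylow $q$-subgroups); once that and the completeness of $\Hol(C_p)$ are in hand, the remaining calculations are routine, and the eight values assemble into Table \ref{cyclic-HGS}.
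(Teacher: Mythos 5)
Your proposal is correct and follows the same skeleton as the paper's proof: apply Lemma \ref{count-formula} case by case, feed in the isomorphism classes from Lemma \ref{cyclic-isoms}, and use Proposition \ref{rel-aut-char} to get $|\Aut(M,M')|=|\Aut(N)|$ in cases (A)--(F). Two of your steps diverge in their details. First, to see that $N$ is characteristic in $M=N\rtimes A$ the paper simply observes that $N$ is the unique abelian subgroup of order $pq$ in $M$, whereas you assemble it from the uniqueness of the Sylow $p$-subgroup together with a (deferred but verifiable) computation showing that no normal subgroup of order $q$ can project onto $\langle\alpha\rangle$; both work, the paper's being shorter. Second, in case (G) the paper again invokes Proposition \ref{rel-aut-char}, this time with $J_t$ in place of $N$ and $A=\langle\beta\rangle$, and computes the normaliser of $\langle\beta\rangle$ in $\Aut(J_t)$ directly to get $|\Aut(M,M')|=p-1$; you instead identify $J_t\rtimes\langle\beta\rangle$ with $\Hol(C_p)=C_p\rtimes C_{p-1}$, use its completeness for odd $p$, and count via the conjugacy of the $p$ involution subgroups to get $|\Aut(G,G')|=2q=p-1$. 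The two computations agree, and yours is arguably slicker but relies on the external fact that the affine group of the line over $\F_p$ is complete. Cases (H) and the final arithmetic match the paper exactly.
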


\begin{table}[h] 
\centerline{ 
\begin{tabular}{|c|c|c|c|c|c|c|c|} \hline
  Key & Order & $|\Aut(M,M')|$  & \# isom.~classes & \# HGS per  \\
      & & & & isom.~class  \\ \hline
   (A) &  $2^{c+1} dpq^2$ & $(p-1)(q-1)$ & $(r+1) \sigma_0(s)$ & $1$ \\
   (B) &  $2^c dpq^2$ & $(p-1)(q-1)$ & $(r+1) \sigma_0(s)$ & $1$ \\
   (C) &  $2^c dpq^2$ & $(p-1)(q-1)$ & $r \sigma_0(s)$ & $1$ \\
    (D) &  $2^{c+1} dpq$ & $(p-1)(q-1)$ & $(r+1) \sigma_0(s)$ & $1$ \\
   (E) &  $2^c dpq$ & $(p-1)(q-1)$ & $(r+1) \sigma_0(s)$ & $1$ \\
   (F) &  $2^c dpq$ & $(p-1)(q-1)$ & $r \sigma_0(s)$ & $1$ \\
   (G) & $2pq$ & $p-1$ & $1$ & $1$ \\
    (H) & $pq$ & $p(p-1)$ & $1$ & $p$ \\
\hline 
\end{tabular}
}  % end centerline
\vskip3mm

\caption{Hopf-Galois Structures for $N$ cyclic} 
 \label{cyclic-HGS}  	
\end{table}
\begin{proof}
In each case, the stabiliser of $1_N$ in $M$ is $M'=M \cap \Aut(N)$. In
cases (A)--(F), $N$ is characteristic in $M$ since it is the unique
abelian subgroup of order $pq$.  As $\Aut(N)$ is abelian, it follows
from Proposition \ref{rel-aut-char} that $\Aut(M,M')=\Aut(N)$. Thus
$|\Aut(M,M')|=(p-1)(q-1)$.  Since there are no automorphisms between
distinct groups in cases (A)--(F), the number of isomorphism classes
is just the number of choices of parameters. (For example, in case (A)
there are $r+1$ ways to choose $c$ and $\sigma_0(s)$ ways to choose
$s$.)  For each isomorphism class $M$, the number of Hopf-Galois
structures is $|\Aut(M,M')|/|\Aut(N)|=1$ by Lemma \ref{count-formula}.

We now consider case (H). The $q-1$ regular groups $J_t$ form a single
isomorphism class.  We
have $|\Aut(J_t)|=p(p-1)$, and for $M=J_t$ we have $|M'|=1$. Thus the
number of Hopf-Galois structures of cyclic type on a $J_t$-extension
is
$$ (q-1) \; \frac{|\Aut(M,M')|}{|\Aut(N)|}  = \frac{(q-1)p(p-1)}{(p-1)(q-1)}=p.  $$  
(This calculation was already in \cite{pq}.)  

Finally we consider case (G). Let $M=J_t \rtimes \langle \beta
\rangle$. We apply Proposition
\ref{rel-aut-char} with $J_t$ in place of $N$ and $A=M' =\langle \beta \rangle $. Conjugation by $\beta$
inverts $\sigma$ and fixes the generator $\tau'=[\tau, \alpha^t]$ of
order $q$. If $\phi \in \Aut(J_t)$ we have $\phi(\sigma)=\sigma^a$ and
$\phi(\tau') = \sigma^b \tau'$ for $1 \leq a \leq p-1$ and $0 \leq b
\leq p-1$. Then $\phi$ normalises $M'$ in $\Aut(J_t)$ if and only if $b=0$. Thus
$|\Aut(M,M')|=p-1$, and the $q-1$ conjugate subgroups give
$(q-1)(p-1)/|\Aut(N)|=1$ Hopf-Galois structure of this type.
\end{proof}

We summarise the results for cyclic $N$ in the following theorem.

\begin{theorem}
There are in total $(6r+4) \sigma_0(s) +2$ isomorphism types of
permutation groups $G$ of degree $pq$ which are realised by a
Hopf-Galois structure of cyclic type. These include the two regular
groups, i.e.~the cyclic and non-abelian groups of order $pq$ (for
which the corresponding Galois extensions have $1$ and $p$ Hopf-Galois
structures of cyclic type respectively). For all the remaining groups
$G$, any field extension $L/K$ realising $G$ is almost classically Galois and 
admits a unique Hopf-Galois structure of cyclic type.
\end{theorem}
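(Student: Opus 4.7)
The plan is to assemble the theorem from the three preceding lemmas, so no new computation is really required beyond bookkeeping.

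First I would verify the count $(6r+4)\sigma_0(s)+2$ by summing the entries in the ``\# isom.~classes'' column of Table \ref{cyclic-HGS}. Cases (A), (B), (D), (E) each contribute $(r+1)\sigma_0(s)$, cases (C), (F) each contribute $r\sigma_0(s)$, and cases (G), (H) each contribute $1$. Adding these gives
$$ 4(r+1)\sigma_0(s) + 2r\sigma_0(s) + 2 = (6r+4)\sigma_0(s) + 2, $$
which is the claimed total. Here I am implicitly using Lemma \ref{cyclic-isoms} to guarantee that no isomorphisms collapse these classes further: groups in cases (A)--(F) are distinguished from one another by Proposition \ref{ab-aut}, none of them is isomorphic to a group in case (G) or (H) (no abelian subgroup of order $pq$), and all $q-1$ groups within case (G), respectively within case (H), form a single isomorphism class.

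Next I would single out the regular groups. The cyclic group $N=C_{pq}$ occurs in Table \ref{cyclic-trans-subgroups} as the specialisation of case (E) with $(c,d)=(0,1)$, and by Table \ref{cyclic-HGS} it admits a single Hopf-Galois structure of cyclic type. The non-abelian group $C_p\rtimes C_q$ is exactly the regular group $J_t$ of case (H), and Table \ref{cyclic-HGS} records $p$ such Hopf-Galois structures, recovering the count of \cite{pq}.

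For the remaining $(6r+4)\sigma_0(s)$ isomorphism classes, I would read off two facts from the tables. Uniqueness of the Hopf-Galois structure of cyclic type is exactly the entry $1$ in the column ``\# HGS per isom.~class'' of Table \ref{cyclic-HGS} for all non-regular cases. The almost classically Galois property follows from the observation made immediately after the proof of the classification table: in each of cases (A)--(F) the stabiliser $M'=M\cap\Aut(N)$ of $1_N$ is complemented in $M$ by the normal subgroup $N$, while in case (G) it is complemented by the normal subgroup $J_t$. Thus no obstacle arises beyond assembling the already-established data, and the theorem follows.
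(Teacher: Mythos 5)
Your proposal is correct and follows essentially the same route as the paper, which states this theorem as a direct summary of Tables \ref{cyclic-trans-subgroups}--\ref{cyclic-HGS} together with Lemma \ref{cyclic-isoms} and the remark that every stabiliser in Table \ref{cyclic-trans-subgroups} is complemented by the normal subgroup $N$ or $J_t$. The arithmetic $4(r+1)\sigma_0(s)+2r\sigma_0(s)+2=(6r+4)\sigma_0(s)+2$ and the identification of the two regular groups (case (E) with $(c,d)=(0,1)$ and case (H)) are exactly as intended.
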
 

\subsection{Metacyclic case}

Now let $N$ be the non-abelian group of order $pq$:
$$ N= \langle \sigma, \tau : \sigma^p=1=\tau^q , \tau \sigma =\sigma^g
\tau \rangle, $$ 
where $g$ has order $q$ mod $p$. (Thus $g$ is the square of a primitive root mod $p$.)
 Then, by Lemma \ref{sf-gps} (or
\cite{pq}), $\Aut(N)$ has order $p(p-1)=2pq$, and is generated by
automorphisms $\alpha$, $\beta$, $\epsilon$, of orders $q$, $2$, $p$
respectively, where
$$ \alpha(\sigma)=\sigma^g, \quad \alpha(\tau)=\tau; $$
$$ \beta(\sigma)=\sigma^{-1}, \quad \beta(\tau)=\tau; $$
$$ \epsilon(\sigma)=\sigma,  \quad \epsilon(\tau)=\sigma \tau. $$
Note that $\alpha$, $\beta$ now depend on the choice of the generator
$\tau$ of $N$. In $\Aut(N)$ we have the commutation relations
$$ \beta \alpha = \alpha \beta, \qquad \alpha \epsilon = \epsilon^g
\alpha, \qquad \beta \epsilon = \epsilon^{-1} \beta. $$ 
Thus $\Hol(N) = \langle \sigma, \tau, \alpha,\beta, \epsilon \rangle$ 
is a group of order $2p^2q^2$. 

By definition, $\Hol(N)$ is the semidirect product $N \rtimes
\Aut(N)$. It is convenient, however, to work with a different
description of $\Hol(N)$ as a semidirect product. The subgroup
$P=\langle \sigma, \epsilon \rangle \cong C_p \times C_p$ is the
unique Sylow $p$-subgroup of $\Hol(N)$, and has a complementary subgroup
$R=\langle \tau, \alpha, \beta \rangle \cong C_q \times C_q \times
C_2$. Thus we have the semidirect product decomposition $\Hol(N)=P
\rtimes R$. The element $\sigma \epsilon^{g-1} \in P$ commutes with
$\tau$:
$$  \sigma \epsilon^{g-1} \tau = \sigma \epsilon^{g-1} (\tau)
\epsilon^{g-1} = \sigma^g \tau \epsilon^{g-1} = \tau \sigma
\epsilon^{g-1}. $$ 
We write $P$ additively, and identify it with $\F_p^2$, the vector space
of dimension $2$ over the field $\F_p$ of $p$ elements. We choose basis vectors
$$\be_1=\begin{pmatrix} 1 \\ 0 \end{pmatrix}, \qquad \be_2
= \begin{pmatrix} 0 \\ 1 \end{pmatrix} $$ 
corresponding to $\sigma$, $\sigma \epsilon^{g-1}$ respectively. Then the
generators $\tau$, $\alpha$, $\beta$ of $R$ are identified with the
matrices
$$ T= \begin{pmatrix} g & 0 \\ 0 & 1 \end{pmatrix}, \qquad
A=\begin{pmatrix} g & 0 \\ 0 & g \end{pmatrix}, \qquad
B=\begin{pmatrix}  -1 & 0 \\ 0 & -1 \end{pmatrix}. $$ 
We write $I$ for the identity matrix, and $\zero$ for the identity element of $P$. For later use, we note that 
\begin{equation} \label{TA-sum}
 I + T + T^2 + \cdots + T^{q-1} = \begin{pmatrix} 0 & 0
   \\ 0 & q \end{pmatrix}, \qquad  I + A + A^2 + \cdots + A^{q-1} =
 \zero. 
\end{equation}
We write an element of $\Hol(N)=P \rtimes R$ as $[\bv,U]$ for $\bv \in
\F_p^2$ and $U$ a matrix corresponding to an element of $R$. The
multiplication in $\Hol(N)$ is then given by 
$$ [\bv, U] [\bw, V] = [\bv+U \bw, UV]. $$
As before, we often abbreviate $[\bv, I]$, $[\zero,U]$ to $\bv$, $U$.  

We next give a preliminary description of the transitive subgroups of
$\Hol(N)$ in this notation. 

\begin{lemma}  \label{metab-coarse}
A subgroup $M$ of $\Hol(N)$ is transitive on $N$ if and only it
satisfies the following two conditions: 
\begin{itemize}
\item[(i)] the image of $M$ under the quotient map $\Hol(N) \to R$ is
  one of the $2+2q$ groups  
$$ \langle T, A \rangle, \quad \langle T, A, B \rangle=R, \quad
  \langle T A^u \rangle, \quad \langle T A^u, B \rangle $$  
where $0 \leq u \leq q-1$;
\item[(ii)] $M \cap P$ is one of the three subgroups $\F_p^2$, $\F_p
  \be_1$, $\F_p \be_2$, each of which is normalised by $R$.
\end{itemize}
\end{lemma}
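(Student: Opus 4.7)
The plan is to use that $M$ is transitive on $N$ if and only if $M \cdot \Aut(N) = \Hol(N)$, since $\Aut(N)$ is the stabiliser of $1_N$ in $\Hol(N)$, and then to translate this identity through the two projections $\pi: \Hol(N) \to R$ (with kernel $P$) and intersection with $P$. Write $\bar M = \pi(M)$ and $M_P = M \cap P$. The first step is to locate $\Aut(N)$ inside the $P \rtimes R$ decomposition: since $\be_2 = \sigma \epsilon^{g-1}$, one computes in additive notation that $\epsilon = (g-1)^{-1}(\be_2 - \be_1)$, so the subgroup $L := \langle \epsilon \rangle$ equals $\F_p(\be_2 - \be_1) \subset P$, a line distinct from both $\F_p \be_1$ and $\F_p \be_2$. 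Combined with $\alpha, \beta \in R$, this gives $\Aut(N) = L \rtimes \langle A, B \rangle$, with $\pi(\Aut(N)) = \langle A, B \rangle$ and $\Aut(N) \cap P = L$.

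For the necessity of (i), applying $\pi$ to $M \cdot \Aut(N) = \Hol(N)$ gives $\bar M \langle A, B \rangle = R$, so $\bar M$ must surject onto $R/\langle A, B \rangle \cong C_q$. Thus $\bar M$ contains some element $T A^u B^k$; if $k=1$ we replace it by its square $T^2 A^{2u}$, which generates the same subgroup as $TA^u$ since $2$ is invertible mod $q$. So $\bar M$ contains some $TA^u$, and an enumeration of subgroups of $R \cong C_q \times C_q \times C_2$ whose Sylow $q$-subgroup is not $\langle A \rangle$ produces exactly the $2+2q$ subgroups listed. For the necessity of (ii), $M_P$ is normal in $M$ and therefore invariant under the restricted linear action of $\bar M$ on $P$; but $TA^u = \mathrm{diag}(g^{u+1}, g^u)$ is a diagonal matrix with distinct $\F_p$-eigenvalues, so its only invariant subspaces of $P$ are $\zero, \F_p \be_1, \F_p \be_2, \F_p^2$. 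The possibility $M_P = \zero$ is ruled out because it would force $M \hookrightarrow R$ and hence $|M| \leq 2q^2 < pq$, contradicting transitivity. The remaining three subspaces are preserved by each of $T, A, B$ (all diagonal in this basis), so each is normalised by $R$.

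For the sufficiency direction I would verify, given (i) and (ii), that $M \cdot \Aut(N) = \Hol(N)$ both after applying $\pi$ and after intersecting with $P$. Applying $\pi$ gives $\bar M \langle A, B \rangle = R$ by (i). The intersection $M \cdot \Aut(N) \cap P$ is a subgroup of the abelian group $P$ containing both $M_P$ and $L$, and in each of the three cases of (ii) the subspaces $L$ and $M_P$ together span $P$. Thus $M \cdot \Aut(N)$ contains $P$ and surjects onto $R$, so $M \cdot \Aut(N) = \Hol(N)$, and $M$ is transitive.

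The main obstacle is the first step: identifying $\Aut(N)$ explicitly inside the less familiar $P \rtimes R$ decomposition by computing $\epsilon$ as a linear combination of $\be_1$ and $\be_2$. Once $L = \F_p(\be_2-\be_1)$ has been pinned down as a line distinct from the two coordinate axes, the rest reduces to a Sylow argument in $R$ and a short eigenspace computation over $\F_p$.
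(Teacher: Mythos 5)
Your overall route is sound and differs somewhat from the paper's. You reformulate transitivity as the factorisation $M\cdot\Aut(N)=\Hol(N)$ and begin by locating the stabiliser inside the $P\rtimes R$ decomposition, computing $\Aut(N)\cap P=\F_p(\be_2-\be_1)$; the paper instead argues directly with orbits (the $M\cap P$-orbits are blocks of size $p$, so $\pi(M)\not\subseteq\langle A,B\rangle$) and only records the line $\ff=\be_1-\be_2$ later, when it is needed for the $\Aut(M,M')$ computations. Your necessity arguments are correct and match the paper's in substance: the enumeration of subgroups of $R\cong C_q\times C_q\times C_2$ meeting the complement of $\langle A,B\rangle$ gives exactly the $2+2q$ groups, and your eigenspace argument for $M\cap P$ (the diagonal matrix $TA^u$ has distinct eigenvalues $g^{u+1}\neq g^u$, so its invariant lines are the two axes) is the coordinate-free version of the paper's explicit computation with $\lambda\be_1+\be_2$; the exclusion of $M\cap P=\zero$ via $|R|=2q^2<pq$ is also fine.

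The one step that does not hold as written is the conclusion of the sufficiency direction. The set $M\cdot\Aut(N)$ is a product of two subgroups and is in general only a subset, not a subgroup, of $\Hol(N)$; so the assertion that $M\cdot\Aut(N)\cap P$ "is a subgroup of $P$" is unjustified, and, more importantly, the inference "contains $P$ and surjects onto $R$, hence equals $\Hol(N)$" is not valid for an arbitrary subset. The first point is harmless, since what you actually use is $M\cdot\Aut(N)\supseteq (M\cap P)\cdot L$, which is a product of subgroups of the abelian group $P$ and hence equals $P$ in each of the three cases. The second point needs a short additional argument: since $M\cdot\Aut(N)$ is stable under left multiplication by $M$ and right multiplication by $\Aut(N)$, the inclusion $P\subseteq M\cdot\Aut(N)$ gives $M\cdot\Aut(N)\supseteq MP\cdot\Aut(N)$; here $MP=\pi^{-1}(\pi(M))$ is a genuine subgroup containing both $P$ and $\pi(M)\leq R$, so $MP\cdot\Aut(N)\supseteq P\cdot\bigl(\pi(M)\langle A,B\rangle\bigr)=P\cdot R=\Hol(N)$ by condition (i). (Alternatively, and closer to the paper's "easy to check", one can note that the orbit of $1_N$ under $M\cap P$ is the block $\langle\sigma\rangle$ of size $p$ and that an element of $M$ lying over $TA^u$ cycles the $q$ blocks $\langle\sigma\rangle\tau^j$.) With that repair your proof is complete.
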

\begin{proof}
Suppose that $M$ is transitive. Then the orbit of $1_N$ under $M \cap
P$ must have size $p$, so the projection of $M$ into $R$ cannot be
contained in $\Aut(N)$. Hence this projection must be one of the
groups listed in (i).

If $P \not \subseteq M$ then $M \cap P$ has order $p$. Thus $M \cap P
= \F_p \be_1$ or $\F_p (\lambda \be_1+\be_2)$ for some $\lambda \in
\F_p$. In the latter case, $M$ contains an element of the form $[\bv, T
  A^a B^b]$ for some $\bv \in P$, $0 \leq a \leq q-1$, $0 \leq b \leq
1$, and
$$ T A^a B^b  (\lambda \be_1 + \be_2) = \begin{pmatrix} (-1)^b g^{a+1}
  \lambda \\ (-1)^b g^a \end{pmatrix} =(-1)^b g^a (g \lambda \be_1 +
\be_2). $$ 
The vector $g \lambda \be_1 + \be_2$ does not lie in $M \cap P$ unless $\lambda=0$. 
Thus $M \cap P$ is one of the subgroups listed in (ii). It is clear
that these are all normalised by $R$. 

Conversely, it is easy to check that if $M$ satisfies (i) and (ii)
then $M$ is transitive. 
\end{proof}

We now determine in detail the transitive subgroups $M$ of $\Hol(N)$.
If $M$ is a transitive group of even order then it is generated by its unique subgroup $M_*$ of index $2$ (which is one
of the transitive subgroups of odd order) together with an element of order $2$ which normalises $M_*$. 
 The elements of $\Hol(N)$ of order $2$ are precisely
those of the form $[\bv,B]$ for $\bv \in P$. We therefore consider the
transitive subgroups of each possible odd order in turn, in each case then identifying the subgroups
of twice that order.

\begin{proposition} \label{mc-p2q}
The transitive subgroups $M$ of $\Hol(N)$ of order divisible by $p^2 q$ are as follows:
\begin{itemize}
\item[(i)] the group $P \rtimes \langle T, A \rangle =N \rtimes \langle \alpha, \epsilon \rangle$ of order $p^2q^2$;
\item[(ii)] the group $P \rtimes \langle T, A, B \rangle =\Hol(N)$ of order $2 p^2q^2$;
\item[(iii)] the $q$ groups $P \rtimes \langle TA^u \rangle$ of order $p^2 q$ for $0 \leq u \leq q-1$;
\item[(iv)] the $q$ groups $P \rtimes \langle TA^u, B \rangle$ of order $2 p^2 q$ for $0 \leq u \leq q-1$.
\end{itemize}
\end{proposition}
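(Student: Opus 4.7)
The plan is to combine Lemma \ref{metab-coarse} with the observation that $P$ is the unique Sylow $p$-subgroup of $\Hol(N)$. Since $|\Hol(N)|=2p^2q^2$, the $p$-part of $|\Hol(N)|$ is exactly $p^2$, so any subgroup $M$ whose order is divisible by $p^2$ must contain the unique Sylow $p$-subgroup $P$. In particular $M \cap P = P = \F_p^2$, so condition (ii) of Lemma \ref{metab-coarse} is automatically satisfied in the strongest way.

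Because $P \subseteq M$ and $\Hol(N) = P \rtimes R$, the subgroup $M$ is the full preimage of its image $H$ under the quotient map $\Hol(N) \to R$; hence $M = P \rtimes H$. By Lemma \ref{metab-coarse}(i), $H$ must be one of $\langle T, A \rangle$, $\langle T, A, B \rangle = R$, $\langle T A^u \rangle$ for $0 \leq u \leq q-1$, or $\langle T A^u, B \rangle$ for $0 \leq u \leq q-1$. Each of these subgroups of $R$ has order divisible by $q$, so every resulting $M$ has order divisible by $p^2 q$, as required; conversely, for any such $H$, the group $P \rtimes H$ satisfies both clauses of Lemma \ref{metab-coarse} and is therefore transitive. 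This gives exactly the four families listed in (i)--(iv).

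Finally, to recognise these groups in the notation of the statement, recall that $\be_1$ was identified with $\sigma$ and $\be_2$ with $\sigma \epsilon^{g-1}$, so $P = \langle \sigma, \epsilon \rangle$, while $T$ and $A$ correspond to $\tau$ and $\alpha$. Hence $P \rtimes \langle T, A \rangle = \langle \sigma, \epsilon, \tau, \alpha \rangle = N \rtimes \langle \alpha, \epsilon \rangle$, and $P \rtimes R$ is $\Hol(N)$ by construction. The distinctness of the $q$ groups in families (iii) and (iv) follows from the distinctness of the $q$ subgroups $\langle T A^u \rangle$ in the elementary abelian group $\langle T, A \rangle \cong C_q \times C_q$. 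There is no serious obstacle here: the proposition is essentially a direct corollary of Lemma \ref{metab-coarse} once the Sylow observation fixes $M \cap P = P$ and forces the semidirect decomposition $M = P \rtimes H$.
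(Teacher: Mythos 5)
Your proof is correct and follows the same route as the paper, which simply states that the proposition is immediate from Lemma \ref{metab-coarse}; you have just made explicit the (correct) intermediate step that $p^2 \mid |M|$ forces $M \supseteq P$ because $P$ is the unique, normal Sylow $p$-subgroup, whence $M = P \rtimes H$ with $H$ running over the list in Lemma \ref{metab-coarse}(i).
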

\begin{proof}
This follows immediately from Lemma \ref{metab-coarse}.
\end{proof}

\begin{proposition} \label{mc-pq2}
The transitive subgroups $M$ of $\Hol(N)$ of order $pq^2$ are as follows:
\begin{itemize}
\item[(i)] $\langle \be_1, T, [\mu \be_2,A] \rangle$ for $\mu \in \F_p$;
\item[(ii)]   $\langle \be_2, [\mu \be_1, T], [\mu \be_1,A] \rangle$ for $\mu \in \F_p$.
\end{itemize}
Each of these $2p$ groups is non-abelian with structure $C_q \times (C_p \rtimes C_q)$. 
\end{proposition}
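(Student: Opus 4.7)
\medskip
\noindent\emph{Proof plan.} The approach is to apply Lemma~\ref{metab-coarse} and narrow the possibilities via the order constraint $|M|=pq^2$. We cannot have $M\cap P=\F_p^2$, as this would force the image of $M$ in $R$ to have non-integer order $q^2/p$. Hence $|M\cap P|=p$, so $M\cap P$ is either $\F_p\be_1$ or $\F_p\be_2$, and the image of $M$ in $R$ must be the unique subgroup of order $q^2$ appearing in Lemma~\ref{metab-coarse}(i), namely $\langle T,A\rangle$. Therefore $M$ is generated by a nonzero element of $M\cap P$ together with lifts $[\bv,T]$ and $[\bw,A]$ of $T$ and $A$, with $\bv,\bw$ determined modulo $M\cap P$.

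I next treat each choice of $M\cap P$ in turn. If $M\cap P=\F_p\be_1$, normalise $\bv=\mu_1\be_2$ and $\bw=\mu_2\be_2$. Using the identity $[\bv,U]^k=[(I+U+\cdots+U^{k-1})\bv,U^k]$ together with \eqref{TA-sum}, one computes $[\mu_1\be_2,T]^q=q\mu_1\be_2$, which must lie in $M\cap P=\F_p\be_1$. Since $q$ is a unit mod $p$, this forces $\mu_1=0$; the analogous relation $[\mu_2\be_2,A]^q=\zero$ holds automatically. Renaming $\mu_2$ as $\mu$ yields case (i). If instead $M\cap P=\F_p\be_2$, then $\bv=\mu_1\be_1$ and $\bw=\mu_2\be_1$; here both $q$-th powers vanish automatically by \eqref{TA-sum}, so the decisive constraint is that the commutator $[[\mu_1\be_1,T],[\mu_2\be_1,A]]$ must lie in $M\cap P$. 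Using $TA=AT$ and $T\be_1=A\be_1=g\be_1$, a direct expansion yields this commutator as $(g-1)(\mu_2-\mu_1)\be_1$; since $\F_p\be_1\cap\F_p\be_2=\{\zero\}$ and $g\not\equiv 1\pmod p$, we obtain $\mu_1=\mu_2$, giving case (ii).

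Distinctness of the resulting $2p$ subgroups is then immediate: the two cases are separated by $M\cap P$, while within each case the set of $\mathbf{u}\in P$ with $[\mathbf{u},A]\in M$ forms a coset of $M\cap P$ that varies with $\mu$. For the structural claim in case (i), I observe that $T$ commutes with $[\mu\be_2,A]$ (since $TA=AT$ and $T\be_2=\be_2$), and that the element $T^{-1}[\mu\be_2,A]=[\mu\be_2,T^{-1}A]$ has order $q$ (because $T^{-1}A$ scales $\be_2$ by $g$ and $1+g+\cdots+g^{q-1}\equiv 0\pmod p$) and centralises both $\be_1$ and $T$. This gives the direct product decomposition $M=\langle\be_1,T\rangle\times\langle T^{-1}[\mu\be_2,A]\rangle\cong(C_p\rtimes C_q)\times C_q$, non-abelian because $T$ acts nontrivially on $\be_1$. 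Case (ii) is treated symmetrically, using $[\mu\be_1,T]$ as the central $C_q$ factor and $\langle\be_2,[\mu\be_1,A]\rangle$ as the $C_p\rtimes C_q$ factor.

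The main obstacle is the commutator computation underlying the second subcase. The automatic vanishing of the $q$-th powers means the constraint $\mu_1=\mu_2$ cannot be extracted from order arguments and only emerges after careful expansion of the commutator in $\Hol(N)=P\rtimes R$. Once this constraint is isolated, and one identifies the right combination (such as $T^{-1}[\mu\be_2,A]$) that exhibits the central direct factor, the remaining verifications reduce to routine application of the multiplication rule $[\bv,U][\bw,V]=[\bv+U\bw,UV]$.
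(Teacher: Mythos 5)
Your proof is correct and follows essentially the same route as the paper: invoke Lemma~\ref{metab-coarse} to pin down $M\cap P$ and the image in $R$, normalise the lifts of $T$ and $A$, kill the $\be_2$-component of the $T$-lift via the $q$-th power computation \eqref{TA-sum} in the first case and force $\mu_1=\mu_2$ via the commutator (equivalently, the conjugation relation the paper uses) in the second, then exhibit the central $C_q$ factor $[\mu\be_2,T^{-1}A]$. No gaps.
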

\begin{proof}
By Lemma \ref{metab-coarse}, if $M$ is a transitive subgroup of order $pq^2$ then 
$M \cap P=\F_p \be_1$ or $\F_p \be_2$, and the projection of $M$
to $R$ is $\langle T, A \rangle$. 

If $M \cap P= \F_p \be_1$ then $M$ is generated by elements
$\be_1$, $[\bv, T]$, $[\bw, A]$
for some $\bv$, $\bw \in \F_p^2$. Replacing $[\bv, T]$ by $(r\be_1) [\bv, T]$ for a suitable $r$, we may assume that $\bv =\lambda
\be_2$ for some $\lambda \in \F_p$.  
Similarly, we may assume that $\bw =\mu \be_2$ for some $\mu \in \F_p$. 
Now, using (\ref{TA-sum}), we have  
$$  [\lambda \be_2, T]^q =[(I+T+T^2+ \cdots+ T^{q-1})\lambda
  \be_2,T^q]=q \lambda \be_2. $$  
As $\be_2\not \in M$, it follows that $\lambda=0$. However, $[\mu
  \be_2,A]^q=\zero$ for any choice of $\mu$. 
Then $M$ is generated by $\be_1$, $T$ and $[\mu \be_2, A]$, which have
orders $p$, $q$, $q$ respectively.  
We check that  $T$ and $[\mu \be_2,A]$ commute, and that 
$$ T \be_1 T^{-1} = g \be_1  = [\mu \be_2,A] \be_1 [\mu \be_2,
  A]^{-1}. $$ 
Thus $\be_1$, $T$ and $[\mu \be_2, A]$ do indeed generate a group of order $pq^2$, 
and its centre is generated by the element $[\mu \be_2, AT^{-1}]$ of order $q$.
Thus $M \cong C_q \times (C_p \rtimes C_q)$. This gives the $p$ groups in (i). 

If now $M \cap P = \F_p \be_2$ then $M$ is generated by elements 
$\be_2$, $[\lambda \be_1, T]$, $[\mu \be_1, A]$
for $\lambda$, $\mu \in \F_p$.  In this case,
the second and third generators have order $q$ for any choice of
$\lambda$, $\mu$.  We calculate 
$$ [\lambda \be_1, T] [\mu \be_1, A] [\lambda \be_1,T]^{-1} =
[(\mu-\lambda)(g-1) \be_1, I] [\mu \be_1, A]. $$
Thus $(\mu-\lambda)(g-1) \be_1 \in M$, so $\lambda=\mu$ and
$M$ is as in (ii).  We then find that $[\mu \be_1, T]$ is in the centre of $M$, while
$$ [\mu \be_2, A] \be_2 [\mu \be_2, A]^{-1} = g\be_2. $$ 
This gives the $p$ groups in (ii). 
\end{proof}

\begin{proposition} \label{mc-2pq2}
The transitive subgroups $M$ of $\Hol(N)$ of order $2pq^2$ are as follows:
\begin{itemize}
\item[(i)] the $p$ groups $\langle \be_1, T, [\mu \be_2,A], [2(1-g)^{-1} \mu \be_2,B]  \rangle$ for $\mu \in \F_p$;
\item[(ii)] the $p$ groups $\langle \be_2, [\mu \be_1, T], [\mu \be_1,A], [2(1-g)^{-1} \mu \be_1, B] \rangle$ for $\mu \in \F_p$.
\end{itemize}
These groups all have the structure $C_q \times (C_p \rtimes C_{2q})$.
\end{proposition}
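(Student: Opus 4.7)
The plan is to use the structure of $M$ as an extension of an odd-order transitive subgroup by an involution, so that Proposition \ref{mc-pq2} does the heavy lifting.

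Since $|M|=2pq^2$, $M$ contains a unique subgroup $M_*$ of index $2$. As $[M:M_*]=2$ and $M$ is transitive on $N$, the $M_*$-orbits partition $N$ into either one or two parts of equal size. Because $pq$ is odd, there is only one $M_*$-orbit, so $M_*$ is a transitive subgroup of order $pq^2$. By Proposition \ref{mc-pq2}, $M_*$ is one of the $2p$ listed groups. Any element of order $2$ in $\Hol(N)$ has the form $[\bv,B]$, and I would generate $M$ as $\langle M_*, [\bv,B]\rangle$ where $[\bv,B]$ normalises $M_*$. Replacing $[\bv,B]$ by $m[\bv,B]$ with $m\in M_*$ only changes $\bv$ by $\bw + U\bv$ for $[\bw,U]\in M_*$ forced to satisfy $U=I$; hence $M$ is determined by the class of $\bv$ modulo $M_*\cap P$.

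The workhorse computation is the conjugation identity
\[
 [\bv,B][\bw,U][\bv,B] = [(I-U)\bv-\bw,U],
\]
obtained from $B=-I$ and $[\bv,B]^{-1}=[\bv,B]$. For $M_*$ as in case (i) of Proposition \ref{mc-pq2}, I would apply this to each of the three generators $\be_1$, $T$, $[\mu\be_2,A]$. Conjugation of $\be_1$ and $T$ lands in $M_*$ automatically (using that $I-T$ maps into $\F_p\be_1=M_*\cap P$). The nontrivial condition comes from $[\mu\be_2,A]$: since $A=gI$, we obtain $(I-A)\bv-\mu\be_2=(1-g)\bv-\mu\be_2$, and requiring this to lie in $\F_p\be_1+\mu\be_2$ forces the $\be_2$-coordinate of $\bv$ to equal $2(1-g)^{-1}\mu$, while the $\be_1$-coordinate is free. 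The free coordinate exactly accounts for the ambiguity modulo $M_*\cap P=\F_p\be_1$, so a canonical choice is $\bv=2(1-g)^{-1}\mu\be_2$. This yields the $p$ groups in (i), one per $\mu$. Case (ii) is the mirror calculation, with roles of $\be_1$ and $\be_2$ swapped and $M_*\cap P=\F_p\be_2$, giving the second family.

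Finally, to identify the isomorphism type $C_q\times(C_p\rtimes C_{2q})$, I would recall from the proof of Proposition \ref{mc-pq2} that the centre of $M_*$ is generated by the order-$q$ element $z=[\mu\be_2,AT^{-1}]$ (case (i)) or $[\mu\be_1,AT^{-1}]$ (case (ii)). A short check using the conjugation identity shows that $z$ still commutes with the new generator $[\bv,B]$: the key fact is that $(I-AT^{-1})\bv=\mu\be_2$ (in case (i)) cancels $\bw=\mu\be_2$ in the conjugation formula. Hence $\langle z\rangle$ is central in $M$. The complement is $\langle \be_1,T,[\bv,B]\rangle$ (or its analogue in case (ii)): here $T$ and $[\bv,B]$ commute and both normalise $\langle\be_1\rangle\cong C_p$, acting by $g$ and $-1$ respectively. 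Since $\gcd(2,q)=1$, $\langle T,[\bv,B]\rangle\cong C_{2q}$, and the full complement is $C_p\rtimes C_{2q}$. Taking the direct product with $\langle z\rangle\cong C_q$ gives the claimed structure.

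The main obstacle is organising the normalisation calculation cleanly so that the conditions collapse to a single equation $(1-g)v_\ast=2\mu$; beyond that, everything is linear algebra over $\F_p$ together with bookkeeping of which subgroups of $M_*$ generators must be conjugated into.
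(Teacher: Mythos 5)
Your proposal is correct and follows essentially the same route as the paper: adjoin an involution $[\bv,B]$ to one of the odd-order transitive subgroups $M_*$ from Proposition \ref{mc-pq2}, normalise $\bv$ modulo $M_*\cap P$, and extract the single condition $\nu=2(1-g)^{-1}\mu$ from requiring the conjugate of $[\mu\be_2,A]$ to land back in $M_*$ (the paper's computation (\ref{A-conj-B})). One small arithmetic slip in your centrality check: $(I-AT^{-1})\bv=(1-g)\cdot 2(1-g)^{-1}\mu\,\be_2=2\mu\be_2$, not $\mu\be_2$, so the conjugate of $z$ is $[2\mu\be_2-\mu\be_2,AT^{-1}]=z$; the conclusion that $z$ stays central is therefore still correct.
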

\begin{proof}
Any such group is obtained by adjoining an element of order $2$ to one of 
the groups in Proposition \ref{mc-pq2}(i) or (ii). In the first case, we 
may take the generator of order $2$ to be $[\nu \be_2,B]$ for some $\nu \in
\F_p$. Then
$$ M =  \langle \be_1, T, [\mu \be_2, A], [\nu \be_2, B] \rangle.  $$ 
We require $[\nu \be_2, B]$ to normalise the subgroup of order $pq^2$. 
We calculate
\begin{eqnarray}
  [\nu \be_2, B][\mu \be_2, A][\nu \be_2, B]^{-1} & = & [\nu \be_2 -
    \mu \be_2, BA][ \nu \be_2, B] \nonumber \\ 
  & = &  [\nu \be_2 - \mu \be_2 -g \nu \be_2, A] \label{A-conj-B} 
\end{eqnarray}  
which lies in $M$ only if $\nu - \mu - g\nu=\mu$, so that
$\nu=2(1-g)^{-1} \mu$, and then $[\mu \be_2,A]$ and $[\nu \be_2, B]$ commute. Similar (but simpler) calculations show that
$[\nu \be_2,B]$ commutes with $T$ and inverts
$\be_1$.  Thus $M \cong C_q \times (C_p \rtimes C_{2q})$, and we obtain the groups in (i). 

If the subgroup of $M$ of index 2 is as in Proposition \ref{mc-pq2}(ii), 
we have
$$ M=\langle \be_2, [\mu \be_1, T],  [\mu \be_1, A], [\nu \be_1, B] \rangle $$ 
for some $\nu \in \F_p$. Consideration of $[\nu \be_1, B][\mu \be_1,
  A] [\nu \be_1,B]$ shows that again $\nu=2(1-g)^{-1} \mu$, and then $[\nu
  \be_1, B]$ and $[\mu \be_1, A]$ commute. Similarly $[\nu \be_1, B]$
and $[\mu \be_1, T]$ commute, and conjugation by $[\nu \be_1,B]$ inverts
$\be_2$.  We then get the groups in (ii).
\end{proof}

\begin{proposition} \label{mc-pq}
The transitive subgroups $M$ of $\Hol(N)$ of order $pq$ are as follows:
\begin{itemize}
\item[(i)] the $p(q-2)$ non-abelian groups $\langle \be_1, [\lambda \be_2, TA^u] \rangle$ for
$\lambda \in \F_p$ and $1 \leq u \leq q-2$;
\item[(ii)] the $p$ cyclic groups $\langle \be_1, [\lambda \be_2, TA^{-1}] \rangle$ for
$\lambda \in \F_p$;
\item[(iii)] the non-abelian group $\langle \be_1, T\rangle = N$;
\item[(iv)] the $p(q-2)$ non-abelian groups $\langle \be_2, [\lambda \be_1, TA^u] \rangle$ for
$\lambda \in \F_p$ and $1 \leq u \leq q-2$;
\item[(v)] the $p$ cyclic groups $\langle \be_2, [\lambda \be_1, T] \rangle$ for
$\lambda \in \F_p$;
\item[(vi)] the non-abelian group $\langle \be_2, TA^{-1} \rangle$.
\end{itemize}
\end{proposition}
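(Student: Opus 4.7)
The plan is to use Lemma \ref{metab-coarse} to narrow down $M$ and then perform a direct computation in each of the two resulting cases. Since $|M|=pq$ is odd and not divisible by $p^2$, the lemma forces $M \cap P = \F_p\be_i$ for some $i \in \{1,2\}$ and forces the image $\overline{M}$ of $M$ in $R$ to be $\langle TA^u \rangle$ for a unique $0 \leq u \leq q-1$ (the only subgroups of $R$ of order $q$ listed in \ref{metab-coarse}(i)). After multiplying a chosen lift of $TA^u$ by a suitable power of $\be_i$, I may write $M = \langle \be_i,\, [\lambda \be_{3-i}, TA^u]\rangle$ for some $\lambda \in \F_p$.

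To confirm that this is actually a subgroup of order $pq$, the kernel of $M \to \overline{M}$ must have order $p$, equivalently $[\lambda \be_{3-i}, TA^u]^q \in \F_p \be_i$. Since $T$ and $A$ commute and $TA^u = \mathrm{diag}(g^{u+1}, g^u)$, a direct computation gives
\begin{equation*}
[\lambda\be_{3-i}, TA^u]^q = \bigl[\lambda\, \sigma_{c_i(u)}\, \be_{3-i},\, I\bigr],
\end{equation*}
where $c_1(u)=u$, $c_2(u)=u+1$, and $\sigma_c := \sum_{k=0}^{q-1} g^{kc}$ equals $q$ in $\F_p$ when $c \equiv 0 \pmod q$ and equals $0$ otherwise (as a geometric series in a nontrivial $q$-th root of unity). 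Hence the $q$-th power lies in $\F_p \be_i$ iff either $\lambda = 0$ or $c_i(u) \not\equiv 0 \pmod q$. For $i=1$ this forces $\lambda=0$ only at $u = 0$, yielding the single group $\langle \be_1, T\rangle = N$ there and $p$ groups at each $u \in \{1,\dots,q-1\}$. For $i=2$, the forced value is instead $u = q-1$, yielding the single group $\langle \be_2, TA^{-1}\rangle$ and $p$ groups at each $u \in \{0,\dots,q-2\}$.

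Finally I would sort the $pq$-element groups into cyclic and non-abelian. Since conjugation of $\be_i$ by $[\lambda \be_{3-i}, TA^u]$ returns $(TA^u)\be_i$, the two generators commute iff $TA^u$ fixes $\be_i$, which happens precisely for $u = q-1$ when $i=1$ and for $u = 0$ when $i=2$. Combining the two dichotomies gives, in each case $i \in \{1,2\}$, the count $1 + p + p(q-2)$ of groups, sorted into (iii),(ii),(i) for $i=1$ and into (vi),(v),(iv) for $i=2$. The main obstacle is purely organizational: the special $u$ that forces $\lambda=0$ and the special $u$ that makes $M$ abelian are \emph{different} values (they are $0$ and $q-1$ for $i=1$, and swap for $i=2$), so care is needed to keep the two dichotomies straight and to match the listed parametrisations of (i)--(vi).
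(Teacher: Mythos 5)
Your proposal is correct and follows essentially the same route as the paper: reduce via Lemma \ref{metab-coarse} to $M=\langle \be_i,[\lambda\be_{3-i},TA^u]\rangle$, compute the $q$-th power $[\lambda\be_{3-i},TA^u]^q=\lambda\bigl(\sum_{k}g^{c_i(u)k}\bigr)\be_{3-i}$ to see when $\lambda$ is forced to vanish, and use the conjugation action on $\be_i$ to separate the cyclic from the non-abelian cases. Your bookkeeping of the two ``special'' values of $u$ (which differ between $i=1$ and $i=2$) matches the paper's case analysis exactly.
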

\begin{proof}
The transitive subgroups of order $pq$ are regular, and were already found in \cite{pq}.
For completeness, we determine them here. 
Any such group $M$ contains either $\be_1$ or $\be_2$. 

If $M$ contains $\be_1$ then
$M=\langle \be_1, [\lambda \be_2, TA^u]  \rangle$
for $\lambda \in \F_p$ and $0 \leq u \leq q-1$. As 
$$ TA^u = \begin{pmatrix} g^{u+1} & 0 \\ 0 & g^u \end{pmatrix}, $$
we have 
$$ [\lambda \be_2, TA^u]^q = \begin{cases} \zero & \mbox{if } u
  \neq 0, \\ 
   q \lambda \be_2 & \mbox{if }u=0. \end{cases}. $$ 
Hence if $u=0$ we must have $\lambda=0$. In either case, $[\lambda
  \be_2, TA^u]$ has order $q$. Moreover, we have
$$ [\lambda \be_2, TA^u]\be_1 [\lambda \be_2, TA^u]^{-1} =
g^{u+1} \be_1. $$ 
Thus $M$ is abelian if and only if $u=q-1$. Taking $1 \leq u \leq
q-2$, and any $\lambda$, we get the $p(q-2)$ groups in (i).  
Taking  $u=q-1$ and $\lambda$ arbitrary, we get the $p$ groups in (ii).
Taking $u=0$ and $\lambda=0$, we get the group in (iii). 

Similarly, a transitive subgroup of order $pq$ containing $\be_2$ is of the form
$M= \langle \be_2, [\lambda\be_1,TA^u] \rangle$. 
We have $[\lambda \be_1, TA^u]^q=q \lambda \be_1$ if $u=q-1$, so we
must have $\lambda=0$ in this case. The two generators commute if and
only if $u=0$. Thus we get the groups in (iv), (v) and (vi).  
\end{proof}

\begin{proposition} \label{mc-2pq}
The transitive subgroups $M$ of $\Hol(N)$ of order $2pq$ are as follows:
\begin{itemize}
\item[(i)] the $p(q-2)$ groups $\langle \be_1, [\lambda \be_2, TA^u], [2(1-g^u)^{-1} \lambda \be_2,B] \rangle$ of type $C_p \rtimes C_{2q}$ for
$\lambda \in \F_p$ and $1 \leq u \leq q-2$;
\item[(ii)] the $p$ groups $\langle \be_1, [\lambda \be_2, TA^{-1}], [2(1-g^{-1})^{-1} \lambda \be_2,B] \rangle$ of type $D_{2p} \times C_q$ for
$\lambda \in \F_p$;
\item[(iii)] the $p$ groups $\langle \be_1, T, [\mu \be_2,B] \rangle$  of type $C_p \rtimes C_{2q}$ for $\mu \in \F_p$;
\item[(iv)] the $p(q-2)$ groups $\langle \be_2, [\lambda \be_1, TA^u], [2(1-g^{u+1})^{-1} \lambda \be_2, B] \rangle$ of type $C_p \rtimes C_{2q}$ for
$\lambda \in \F_p$ and $1 \leq u \leq q-2$;
\item[(v)] the $p$ groups $\langle \be_2, [\lambda \be_1, T], [2(1-g)^{-1} \lambda \be_2,B] \rangle$ of type $D_{2p} \times C_q$ for
$\lambda \in \F_p$;
\item[(vi)] the $p$ groups $\langle \be_2, TA^{-1}, [\mu \be_2,B] \rangle$ of type $C_p \rtimes C_{2q}$ for
$\mu \in \F_p$.
\end{itemize}
\end{proposition}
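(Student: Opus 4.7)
The plan is to mirror the proof of Proposition \ref{mc-2pq2}, now building on Proposition \ref{mc-pq} in place of Proposition \ref{mc-pq2}. First I would show that any transitive $M \subseteq \Hol(N)$ of order $2pq$ contains a unique transitive subgroup $M_*$ of order $pq$. Indeed, by Lemma \ref{metab-coarse}(i) the projection of $M$ into $R$ has order $2q$, hence equals $\langle TA^u, B\rangle$ for some $u$; as $B = -I$ is central in $R$ this projection decomposes as $\langle TA^u\rangle \times \langle B\rangle$, so the preimage $M_*$ of $\langle TA^u\rangle$ in $M$ is a subgroup of index $2$ and order $pq$, and is itself transitive by Lemma \ref{metab-coarse}(ii). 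Thus $M_*$ is one of the six groups in Proposition \ref{mc-pq}, and $M = \langle M_*, t\rangle$ for any involution $t \in M \setminus M_*$. The involutions in $\Hol(N)$ are precisely the elements $[\bv, B]$ with $\bv \in P$ (since $B$ is the unique involution of $R$), and two choices $[\bv_1, B]$, $[\bv_2, B]$ produce the same $M$ exactly when $\bv_1 - \bv_2 \in M_* \cap P$.

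For each $M_*$ I would then determine which cosets of $M_* \cap P$ in $P$ contain a vector $\bv$ with $[\bv, B]$ normalising $M_*$. Using $B = -I$, conjugation satisfies the identity
$$ [\bv, B]\,[\bw, U]\,[\bv, B]^{-1} = [(I - U)\bv - \bw,\; U]. $$
In cases (iii) and (vi), where the $C_q$-generator of $M_*$ has $P$-component $\zero$, the image $(I-U)\bv$ always lies in $M_* \cap P$, so every $\bv$ normalises $M_*$ and the free parameter $\mu$ ranges over a set of coset representatives for $M_* \cap P$ in $P$, yielding $p$ groups. In cases (i), (ii), (iv), (v) the $C_q$-generator has the form $[\lambda \be_j, TA^u]$ with $\lambda \neq 0$, and requiring the displayed image to remain in $M_*$ pins down one coordinate of $\bv$ to the value $2(1 - g^{\bullet})^{-1}\lambda$ (obtained by solving $(1 - g^{\bullet})\nu - \lambda = \lambda$, where the exponent $\bullet \in \{u, u+1\}$ is the eigenvalue exponent of $TA^u$ on the line perpendicular to $M_* \cap P$), while the other coordinate is free and is absorbed into $M_* \cap P$. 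This produces the six families displayed.

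The isomorphism type of each $M$ then follows from the same computation: $t$ inverts the generator of $M_* \cap P$ (because $B = -I$), and the chosen $\bv$ makes $t$ commute with the $C_q$-generator of $M_*$. Thus $t$ together with that generator spans a $C_{2q}$ subgroup. For the non-abelian $M_*$ (cases (i), (iii), (iv), (vi)), the $C_q$-part acts faithfully on $M_* \cap P \cong C_p$, so the enlarged $C_{2q}$ acts faithfully and $M \cong C_p \rtimes C_{2q}$. For the cyclic $M_*$ (cases (ii), (v)), the $C_q$-part centralises $M_* \cap P$, so only the $C_2$ factor acts nontrivially on the $C_p$ part, giving $M \cong D_{2p} \times C_q$. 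The main obstacle is the bookkeeping across the six cases: one must track which basis line equals $M_* \cap P$, which coordinate of $\bv$ is forced, and which value of $g^{\bullet}$ appears in the denominator. Once this is organised, each case reduces to a short calculation, and the invertibility of $1 - g^{\bullet}$ in $\F_p$ is automatic in the ranges considered.
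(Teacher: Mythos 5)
Your proposal is correct and follows essentially the same route as the paper: adjoin an involution $[\bv,B]$ to each transitive subgroup of order $pq$ from Proposition \ref{mc-pq}, and use the conjugation identity to force one coordinate of $\bv$ while absorbing the other into $M_*\cap P$. One small point in your favour: carried out literally, your computation in cases (iv)--(vi) puts the pinned-down component of the involution along $\be_1$ rather than $\be_2$ (consistent with the corresponding rows of Table \ref{metab-trans-subgroups}), so the $\be_2$'s appearing there in the displayed statement are typographical slips rather than a defect in your argument.
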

\begin{proof}
First suppose that $\be_1 \in M$, so $M$ contains one of the subgroups in Proposition \ref{mc-pq}(i), (ii) or (iii). Then
$$ M=\langle \be_1,  [\lambda \be_2, TA^u], [\mu \be_2, B]
\rangle $$ 
for some $\mu \in \F_p$, where $\lambda=0$ if $u=0$, and the first two
generators commute if and only if $u=q-1$.  A calculation similar to
(\ref{A-conj-B}) gives
$$ [\mu \be_2, B] [\lambda \be_2, TA^u]  [\mu \be_2, B] = [(\mu
  -\lambda-g^u \mu) \be_2, TA^u], $$ 
so that $\mu - \lambda-g^u \mu = \lambda$. Then $[\mu \be_2,B]$ commutes with
$[\lambda \be_2, TA^u]$ while $[\mu \be_2, B]
  \be_1 [\mu \be_2, B]=-\be_1$.  
If $u \neq 0$, we have $\mu = 2(1-g^u)^{-1} \lambda$, and the
resulting group is of type $C_p \rtimes C_{2q}$ if $u \neq q-1$ and
$D_{2pq}$ if $u=q-1$. This gives the $p(q-2)$ groups in (i) and the
$p$ groups in (ii). If $u=0$, we have $\lambda=0$ but we may choose
$\mu$ arbitrarily.  Thus we obtain the $p$ groups in (iii).

If $\be_2 \in M$, similar calculations give cases (iv), (v) and (vi).
\end{proof}

The results of Propositions \ref{mc-p2q}--\ref{mc-2pq} are summarised in
Table \ref{metab-trans-subgroups}. 

\begin{table}[h]
\centerline{ 
\begin{tabular}{|c|c|c|c|c|c|c|c|} \hline
   Order & Parameters  & Structure & Group   \\ \hline 
    $p^2 q^2$ & & $\F_p^2 \rtimes (C_q \times C_q)$ &  $P \rtimes
  \langle T, A\rangle = N \rtimes \langle \alpha, \epsilon \rangle$
  \\  \hline
    $2 p^2 q^2 $ & & $\F_p^2 \rtimes(C_q \times C_{2q})$   & $\Hol(N)$ \\ \hline
    $p^2 q$ & $0 \leq u \leq q-1$ & $\F_p^2 \rtimes C_q$ & $P \rtimes
  \langle T A^u \rangle$ \\ \hline
   $2p^2 q$ & $0 \leq u \leq q-1$ & $\F_p^2 \rtimes C_{2q}$  & $P \rtimes \langle
  TA^u, B \rangle$ \\ \hline
   $pq^2$ & $\mu \in \F_p$ & $C_q \times (C_p \rtimes C_q)$ &
  $\langle \be_1,T, [\mu \be_2, A] \rangle$ \\ 
     & $\mu \in \F_p$ & $C_q \times (C_p \rtimes C_q)$ &
       $\langle \be_2,  [\mu \be_1,T], [\mu \be_1, A] \rangle$ \\ \hline
    $2pq^2$ & $\mu \in \F_p$ & $C_q \times (C_p \rtimes C_{2q})$ & $\langle \be_1,
       T, [\mu \be_2, A], [2(1-g)^{-1} \mu \be_2, B] \rangle$
       \\   
    & $\mu \in \F_p$ & $C_q \times (C_p \times C_{2q})$ & $\langle \be_2, [\mu
         \be_1,T], [\mu \be_1, A], [2(1-g)^{-1} \mu \be_1, B] \rangle$
       \\   \hline
    $pq$ & $1 \leq u \leq q-2$, $\lambda \in \F_p$ & $C_p
       \rtimes C_q$ & $\langle \be_1, [\lambda \be_2, TA^u] \rangle$
       \\
    & $\lambda \in \F_p$ & $C_{pq}$ & $\langle \be_1,
         [\lambda \be_2, TA^{-1}] \rangle$ \\ 
    & & $C_p \rtimes C_q$ & $\langle \be_1, T \rangle = N$
        \\ 
     & $1 \leq u \leq q-2$, $\lambda \in \F_p$ & $C_p
           \rtimes C_q$ &  $\langle \be_2, [\lambda \be_1, TA^u]
           \rangle$ \\ 
     & $\lambda \in \F_p$ & $C_{pq}$ & $\langle \be_2,
                  [\lambda \be_1, T] \rangle$  \\ 
     & & $C_p \rtimes C_q$ &  $\langle \be_2, TA^{-1} \rangle$ \\ \hline
    $2pq$ & $1 \leq u \leq q-2$, $\lambda \in \F_p$ &  $C_p
       \rtimes C_{2q}$ &
       $\langle \be_1, [\lambda \be_2, TA^u], [2(1-g^u)^{-1} \lambda
         \be_2, B] \rangle$ \\ 
      & $\lambda \in \F_p$ & $D_{2p} \times C_q$ & $\langle \be_1, [\lambda
           \be_2, TA^{-1}], [2(1-g^{-1})^{-1} \lambda \be_2, B]
         \rangle$ \\ 
     & $\mu \in \F_p$ & $C_p \rtimes C_{2q}$ & $\langle \be_1, T,
           [\mu \be_2, B] \rangle$ \\ 
      & $1 \leq u \leq q-2$, $\lambda \in \F_p$ &  $C_p \rtimes C_{2q}$  &
           $\langle \be_2, [\lambda \be_1, TA^u], [2(1-g^{u+1})^{-1}
             \lambda \be_1, B] \rangle$  \\           
     & $\lambda \in \F_p$ &  $D_{2p} \times C_q$ & $\langle \be_2, [\lambda
                    \be_1, T], [2(1-g)^{-1} \lambda \be_1, B] \rangle$ \\                     
     & $\mu \in \F_p$ &  $C_p \rtimes C_{2q}$ & $\langle \be_1, 
                    TA^{-1}, [\mu \be_2, B] \rangle$ \\ 
\hline
\end{tabular}
}  % end centerline
\vskip3mm

\caption{Transitive subgroups for $N$ metabelian} 
 \label{metab-trans-subgroups}  	
\end{table}

For each of the transitive subgroups $M$ in Table \ref{metab-trans-subgroups}, we wish to compute
$|\Aut(M,M')|$, where $M'$ is the stabiliser of $1_N$ in $M$. We also
want to know when any two of these groups $M$ are isomorphic, either
as abstract groups or as permutation groups. We observe that $P \cap \Aut(N)
= \langle \epsilon \rangle$.  This
subgroup is also generated by $\epsilon^{1-g} = \sigma (\sigma \epsilon^{g-1})^{-1}$, which corresponds to the vector
$$ \ff = \begin{pmatrix} 1 \\ -1 \end{pmatrix}. $$

\begin{proposition} \label{Aut-p2q2} 
For the two groups $M$ in Proposition \ref{mc-p2q}(i) and (ii), we have 
 $|\Aut(M,M')|=2p(p-1)$. 
\end{proposition}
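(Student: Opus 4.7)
The plan is to analyse $\Aut(M, M')$ by restricting to $P$. In both cases $M$ is a semidirect product $P \rtimes S$ (with $S = \langle T, A\rangle$ in~(i) and $S = R$ in~(ii)), so $P$ is normal and hence characteristic in $M$; likewise $\langle \epsilon \rangle = \F_p \ff$ is the unique Sylow $p$-subgroup of $M'$ and so characteristic there. Any $\theta \in \Aut(M, M')$ therefore preserves both $P$ and $\F_p \ff$, inducing a restriction homomorphism $\rho : \Aut(M, M') \to GL_2(\F_p)$ whose image stabilises the line $\F_p \ff$.

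To bound $\mathrm{Im}(\rho)$, I would write $\theta(\tau) = [\bv, X]$ with $X \in S$; in both cases elements of $S$ act diagonally in the basis $\be_1, \be_2$. The homomorphism property gives $X = \theta_P T \theta_P^{-1}$, which has eigenvalue set $\{g, 1\}$ like $T$, so $X \in \{T, T^{-1}A\}$. If $X = T$ then $\theta_P$ centralises $T$, is diagonal in $\be_1, \be_2$, and preservation of $\F_p \ff$ forces $\theta_P = aI$ for some $a \in \F_p^\times$. If $X = T^{-1}A$ then $\theta_P$ swaps the eigenlines of $T$ and is antidiagonal, and preservation of $\F_p \ff$ forces $\theta_P = aW$ with $W = \begin{pmatrix} 0 & 1 \\ 1 & 0 \end{pmatrix}$. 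Hence $\mathrm{Im}(\rho) \subseteq \F_p^\times I \cup \F_p^\times W$, of size $2(p-1)$.

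Both types of matrix are realised: the scaling $[\bv, X] \mapsto [a\bv, X]$ is an automorphism of $\Hol(N)$ restricting to $\Aut(M, M')$ with $\theta_P = aI$, and the `swap' $[\bv, X] \mapsto [aW\bv, WXW^{-1}]$ is also an automorphism, preserving $M$ (since $WTW^{-1} = T^{-1}A \in \langle T, A\rangle$, $WAW^{-1} = A$ and $WBW^{-1} = B$) and $M'$ (since $W\ff = -\ff$), with $\theta_P = aW$. Finally, I compute $\ker \rho$: a $\theta$ with $\theta_P = I$ satisfies $\theta(\tau) = \sigma^{v_1}\tau$ for some $v_1 \in \F_p$ (the $\be_2$-component is killed by the condition $\theta(\tau)^q = 1$ via \eqref{TA-sum}), and $\alpha \tau = \tau \alpha$ then forces $\theta(\alpha) = \epsilon^{v_1(1-g)}\alpha$. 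These are exactly the inner automorphisms of $M$ by $\epsilon^{v_1}$, so $|\ker \rho| = p$. Combining gives $|\Aut(M, M')| = 2(p-1) \cdot p = 2p(p-1)$. The delicate step is the bound on $\mathrm{Im}(\rho)$: the a priori set of $\theta_P \in GL_2(\F_p)$ preserving $\F_p \ff$ has order $p(p-1)^2$, and it is the requirement that $\theta(\tau)$ have diagonal action on $P$ that cuts it down to just two scalar cosets.
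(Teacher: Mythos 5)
Your proof is correct and follows essentially the same route as the paper's: both restrict $\theta$ to $P$, use the eigenstructure of the action of $\langle T,A\rangle$ on $P$ together with preservation of the line $\F_p\ff$ (the Sylow $p$-subgroup of $M'$) to force $\theta|_P$ to be a scalar multiple of $I$ or of the swap matrix, and then pin down $\theta$ on the complement via the same commutation relations. The only organisational difference is that you count via $|\ker\rho|\cdot|\mathrm{Im}\,\rho|=p\cdot 2(p-1)$ and identify the kernel with the inner automorphisms by $\langle\epsilon\rangle$ (a nice observation not made explicit in the paper), whereas the paper solves directly for $\theta(A)=[\lambda\ff,A]$ and $\theta(T)$ and counts parameters; just note that for $M=\Hol(N)$ your ``determined by $v_1$'' step should also record that $\theta(\beta)$ is forced by commutation with $\theta(\alpha)$, exactly as in the paper's equation (\ref{p2q2-mu}).
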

\begin{proof}
First let $M= P \rtimes \langle T, A\rangle$ be the group of order $p^2 q^2$ 
in Proposition \ref{mc-p2q}(i). Then $M$ has exactly two normal subgroups
$\F_p \be_1$ and $\F_p \be_2$ of order $p$, so any $\theta \in
\Aut(M,M')$ must either preserve or swap these two subgroups. Thus
the restriction of $\theta$ to $P$ acts as a matrix of the form
$$ \begin{pmatrix} x & 0 \\ 0 & y \end{pmatrix} \mbox{ or
} \begin{pmatrix} 0 & x \\ y & 0 \end{pmatrix} $$ 
with $x$, $y \in \F_p^\times$.  However, $\theta$ must also fix
the group $M'=M \cap \Aut(N)$ of order $pq$, and hence must fix its unique Sylow $p$-subgroup. Hence $\theta(\ff)  \in \F_p \ff$.  Since
$$ \begin{pmatrix} x & 0 \\ 0 & y \end{pmatrix} \ff = \begin{pmatrix}
  x \\ -y \end{pmatrix} \mbox{ and }  
    \begin{pmatrix} 0 & x \\ y & 0 \end{pmatrix} \ff = \begin{pmatrix}
      -x \\ y \end{pmatrix}, $$ 
we have $y=x$ in either case. Let us define $\pi$ to be the identity
map on $\{1,2\}$ in the first case, and the unique transposition on
$\{1,2\}$ in the second, 
so that $\theta(\be_i)=x \be_{\pi(i)}$ in either case. 

Now the matrix $A$ corresponds to $\alpha \in \Aut(N)$ of order $q$, 
so
$$ \theta(A) = [\lambda \ff, A^c] $$
with $\lambda \in \F_p$ and $1 \leq c \leq q-1$. From the relations $A \be_i A^{-1}=g\be_i$ for $i=1$, $2$ we obtain 
$$ [\lambda \ff, A^c] \be_{\pi(i)} [\lambda \ff,A^c]^{-1} = g \be_{\pi(i)}, $$
which holds if and only if $g^c \be_{\pi(i)} =  g \be_{\pi(i)}$. Hence $c=1$. 
Also, we must have
$$ \theta(T)=[\bv, T^d A^m ] $$
for some vector $\bv$ and some $d$, $m \in \F_q$ with $d \neq 0$.
As $T$ and $A$ commute, we have
$$ [\lambda \ff, A] [\bv, T^d A^m]= [\bv, T^d A^m] [\lambda \ff, A], $$
which is equivalent to
$$ \lambda \ff +g \bv = \bv + \lambda T^d A^m \ff, $$
and so to
\begin{equation} \label{p2q2-v}
   \bv = (g-1)^{-1} \lambda (T^d A^m-I) \ff. 
\end{equation}
Thus $\bv$ is determined by $\lambda$, $m$ and $d$.  If $\pi$ is the
identity map, the relations 
\begin{equation} \label{p2q2eqn}
   T \be_1 T^{-1}=g\be_1, \qquad T \be_2 T^{-1}=\be_2  
\end{equation}
give
$$ [\bv, T^d A^m] x \be_1 [\bv, T^d A^m]^{-1} = gx\be_1, \qquad  [\bv,
  T^d A^m] x \be_2 [\bv, T^d A^m]^{-1} = x\be_2, $$ 
which are equivalent to  
$$ g^{m+d} x \be_1 = gx \be_1, \qquad g^m x \be_2 = x \be_2, $$
so that $m=0$, $d=1$.  If $\pi$ is the transposition, then
(\ref{p2q2eqn}) gives   
$$ [\bv, T^d A^m] x \be_2 [\bv, T^d A^m]^{-1} = gx\be_2, \qquad  [\bv,
  T^d A^m] x \be_1 [\bv, T^d A^m]^{-1} = x\be_1, $$ 
so that 
$$ g^m x \be_2 = gx \be_2, \qquad g^{d+m} x \be_1 = x \be_1, $$
and $m=1$, $d=q-1$. Hence for $\theta \in \Aut(M, M')$ we have either
$$ \theta(\be_1)=x \be_1, \quad \theta(\be_2)=x\be_2, \quad
\theta(A)=[\lambda \ff, A], \quad \theta(T) = [\bv,T] $$ 
with $\bv  = \lambda \be_1$, or
$$ \theta(\be_1)=x \be_2, \quad \theta(\be_2)=x\be_1, \quad
\theta(A)=[\lambda \ff, A], \quad \theta(T) =
      [\bv,T^{q-1} A] $$ 
with $\bv  = - \lambda \be_2$, 
where $x \in \F_p^\times$ and $\lambda \in \F_p$. Hence $|\Aut(M,M')|=2p(p-1)$. 

We next consider the case where $M$ is the full group $\Hol(N)$ of
order $2p^2q^2$ in Proposition \ref{mc-p2q}(ii). If $\theta \in \Hol(M,M')$ then the restriction of
$\theta$ to the unique subgroup of index $2$ must be one of the
$2p(p-1)$ automorphisms just described, and
$\theta(B) \in \Aut(N)$ has order
$2$, so that $\theta(B) = [\mu \ff, B]$
for some $\mu \in \F_p$. Since $\theta(A)$ and
$\theta(B)$ must commute, we have
$$ \lambda \ff + g \mu \ff = \mu \ff - \lambda \ff, $$
so
\begin{equation} \label{p2q2-mu}
     \mu = 2(1-g)^{-1} \lambda. 
\end{equation}
Finally, as  $\theta(T)$ and $\theta(B)$ must
commute, we require 
$$ [\mu \ff, B] [\bv, T^d A^m] =   [\bv, T^d A^m]  [\mu \ff, B], $$
which is equivalent to 
$$ \mu \ff - \bv = \bv + \mu T^d A^m \ff. $$
By (\ref{p2q2-v}) and (\ref{p2q2-mu}), this last condition is
automatically satisfied. Thus each of the $2p(p-1)$ automorphisms
$\theta$ of the subgroup of $M$ of index $2$ extends uniquely to an automorphism of 
$M$. Again we have $|\Aut(M,M')|=2p(p-1)$.
\end{proof}

\begin{proposition} \label{Aut-p2q}
Let
$$ M_u = P \rtimes \langle TA^u \rangle, \qquad 0 \leq u \leq q-1  $$
be the $q$ groups of order $p^2 q$ in Proposition \ref{mc-p2q}(iii), and let
$$ \widehat{M}_u = P \rtimes \langle TA^u, B \rangle, \qquad 0 \leq u \leq q-1  $$
be the $q$ groups of order $2p^2 q$ in Proposition \ref{mc-p2q}(iv).
Then $M_u$ and $M_v$ are isomorphic (as abstract groups or permutation groups)
if and only if $u+v+1=0 \pmod{q}$. Hence the groups $M_u$ fall into $\frac{1}{2}(q+1)$ isomorphism classes.
Similarly for the groups $\widehat{M}_u$. Moreover
$$ |\Aut(M_u,M_u')| = \begin{cases} p^2 (p-1) & \mbox{if } u \neq \frac{1}{2}(q-1), \\
       2p^2 (p-1) & \mbox{if } u =\frac{1}{2}(q-1) \end{cases} $$
and 
$$ |\Aut(\widehat{M}_u,\widehat{M}_u')| = \begin{cases} p (p-1) & \mbox{if } u \neq \frac{1}{2}(q-1), \\
       2p (p-1) & \mbox{if } u =\frac{1}{2}(q-1). \end{cases} $$
\end{proposition}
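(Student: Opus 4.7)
The plan is to exploit that $P$ is the unique Sylow $p$-subgroup of $M_u$ (and of $\widehat{M}_u$): as $q<p$ we have $q\not\equiv 1\pmod{p}$, so $P$ is characteristic. Any isomorphism $\phi$ between two of these groups therefore restricts to some $X\in GL_2(\F_p)$ on $P$, and sends $TA^u$ to an element $[\bv,Y]$ whose $R$-part $Y$ projects to a generator of the target modulo $P$, forcing $Y=(TA^v)^k$ for some $k\in\F_q^\times$.

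First I would derive the isomorphism criterion. The relations $(TA^u)\be_i(TA^u)^{-1}=g^{u+\delta_{i,1}}\be_i$ transport under $\phi$ to the matrix equation $X\cdot TA^u\cdot X^{-1}=(TA^v)^k$. Both $TA^u=\mathrm{diag}(g^{u+1},g^u)$ and $(TA^v)^k=\mathrm{diag}(g^{k(v+1)},g^{kv})$ are diagonal with distinct eigenvalues (non-trivial $q$-th roots of unity), so $X$ is either diagonal (Case~A, preserving $\F_p\be_1$ and $\F_p\be_2$) or anti-diagonal (Case~B, swapping them). Matching eigenvalues in Case~A forces $k=1$ and $u\equiv v\pmod{q}$; in Case~B it forces $k\equiv -1$ and $u+v+1\equiv 0\pmod{q}$. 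In either case, $X$ can be chosen to preserve $\F_p\ff$, so the same numerical condition characterises both abstract and permutation isomorphism. The involution $u\mapsto q-1-u$ on $\{0,\dots,q-1\}$ has unique fixed point $(q-1)/2$, hence $\tfrac{1}{2}(q+1)$ isomorphism classes. For $\widehat{M}_u$, the subgroup $M_u$ is the unique subgroup of index $2$, hence characteristic, and the same criterion follows.

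For $|\Aut(M_u,M_u')|$, I specialise to $v=u$. Case~A applies unconditionally; Case~B applies iff $2u+1\equiv 0\pmod{q}$, i.e.~$u=(q-1)/2$. The constraint $X\ff\in\F_p\ff$ forces the two non-zero entries of $X$ to coincide, giving $p-1$ valid $X$ per case. Writing $\theta(TA^u)=[\bv,(TA^u)^k]$, the condition $\theta(TA^u)^q=I$ unfolds to $\bigl(\sum_{j=0}^{q-1}(TA^u)^{kj}\bigr)\bv=\zero$; in the generic range this matrix sum vanishes (by the analogue of $I+A+\cdots+A^{q-1}=\zero$ from (\ref{TA-sum})), leaving $\bv\in P$ free and yielding $p^2$ choices. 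Thus Case~A contributes $p^2(p-1)$, doubled to $2p^2(p-1)$ at $u=(q-1)/2$.

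For $\widehat{M}_u$, any $\theta\in\Aut(\widehat{M}_u,\widehat{M}_u')$ restricts to an element of $\Aut(M_u,M_u')$ and sends $B$ to some involution in $\widehat{M}_u'\cong D_{2p}$, i.e.~to $[\bv',B]$ with $\bv'\in\F_p\ff$. Since $B=-I$ is central in $GL_2(\F_p)$, $B$ and $TA^u$ commute inside $\widehat{M}_u$; imposing $\theta(B)\theta(TA^u)=\theta(TA^u)\theta(B)$ yields the linear equation $(I-(TA^u)^k)\bv'=2\bv$, which determines $\bv$ uniquely from $\bv'$. The $p^2$ freedom for $\bv$ collapses to the $p$ values of $\bv'$, giving $p(p-1)$ in Case~A and $2p(p-1)$ when Case~B additionally contributes at $u=(q-1)/2$.

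The main obstacle is Case~B at $u=(q-1)/2$: one must verify that anti-diagonal $X$ combined with $k=-1$ respects every defining relation of $M_u$ (and of $\widehat{M}_u$) and yields a family of automorphisms disjoint from those of Case~A. A secondary subtlety in the $\widehat{M}_u$ analysis is checking that the commutativity equation $(I-(TA^u)^k)\bv'=2\bv$ is always solvable for $\bv$ compatible with the $M_u$-level constraints, so as to justify the precise factor-of-$p$ reduction.
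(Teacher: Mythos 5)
Your argument follows the paper's proof quite closely for the isomorphism classification (eigenvalues of $TA^u$ on the characteristic subgroup $P$, diagonal versus anti-diagonal $X$, the condition $X\ff\in\F_p\ff$), and that part is sound. The genuine problem is in the count of $|\Aut(M_u,M_u')|$, at exactly the point you wave at with the phrase ``in the generic range''. The sum $\sum_{j=0}^{q-1}(TA^u)^{j}$ vanishes only for $1\le u\le q-2$: for $u=0$ it equals $\mathrm{diag}(0,q)$ and for $u=q-1$ it equals $\mathrm{diag}(q,0)$, so the requirement that $\theta(TA^u)=[\bv,TA^u]$ have order $q$ confines $\bv$ to the line $\F_p\be_1$ (resp.\ $\F_p\be_2$), leaving $p$ rather than $p^2$ choices and giving $|\Aut(M_0,M_0')|=p(p-1)$. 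One can see without computation that $p^2(p-1)$ is impossible here: $M_0\cong C_p\times(C_p\rtimes C_q)$ has $|\Aut(M_0)|=p(p-1)^2$, which is smaller than $p^2(p-1)$. So your proof does not establish the stated formula at $u=0$ and $u=q-1$ --- and indeed it cannot, because the formula is false there. (The paper's own proof has the same defect: it checks only the conjugation relations and asserts ``no restriction on $\bv$'', never imposing $\theta(TA^u)^q=1$. By making that constraint explicit you have actually surfaced an error in the proposition; you just failed to follow it through outside the generic range. The downstream entries in Table \ref{metab-trans-HGS} for the class $C_p\times(C_p\rtimes C_q)$ would change accordingly.)

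For $\widehat{M}_u$ you take a genuinely different route from the paper: you adjoin $\theta(B)=[\bv',B]$ and use the commutation relation $(I-(TA^u)^k)\bv'=2\bv$, whereas the paper works with the order-$2q$ generator $TA^uB$ and observes that $I+S+\cdots+S^{q-1}$, with $S=(TA^uB)^{\pm1}$, is invertible because its eigenvalues $-g^{u+1}$, $-g^u$ are never $1$; that invertibility immediately pins $\bv$ to a line of $p$ elements and yields the factor-of-$p$ reduction with no case analysis. Your flagged ``secondary subtlety'' is real but resolvable: for $1\le u\le q-2$ the matrix $I-(TA^u)^k$ is invertible, so exactly the $p$ vectors $\bv\in\tfrac{1}{2}(I-(TA^u)^k)\F_p\ff$ admit a compatible $\bv'$, confirming the count $p(p-1)$. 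Curiously, the $\widehat{M}_u$ formula survives even at $u=0$ and $u=q-1$: there the $M_u$-level order constraint already forces $\bv\in\F_p\be_1$ (resp.\ $\F_p\be_2$), which coincides with the image of $\F_p\ff$ under $\tfrac{1}{2}(I-TA^u)$, so all $p(p-1)$ automorphisms of $M_u$ extend uniquely. You should add this verification, and correct the $M_u$ count in the two non-generic cases.
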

\begin{proof}
The generator $TA^u$ of $M_u$ acts on $P$ with 
two distinct eigenvalues $g^{u+1}$, $g^u$.  Any element of order $q$
in $M_u$ has the form $[\bv, (TA^u)^c]$ for some $\bv \in P$ and
$c \in \F_q^\times$, and this element acts on $P$
with eigenvalues $g^{(u+1)c}$, $g^{uc}$. Thus if $M_u$ and $M_v$ are
isomorphic as abstract groups, then $\{g^{(u+1)c},g^{uc}\} = \{
g^{v+1}, g^v\}$ for some $c$. Thus either
$(u+1)c = v+1$, $uc=v$ in $\F_q$, or 
$(u+1)c=v$, $uc=v+1$. In the first case $c=1$ so $u=v$. In the second case
$c=-1$ so $u+v+1=0$, and then $v \neq u$ unless
$u=\frac{1}{2}(q-1)$.  We then have an isomorphism
$\phi: M_u \to M_v$ given by $\phi(\be_1)=\be_2$, $\phi(\be_2)=\be_1$,
$\phi(T A^u) = (T A^v)^{-1}$.  As $\phi(\ff)=-\ff$, it
follows that $\phi$ is an isomorphism of permutation groups. Thus the
groups $M_u$ fall into $\frac{1}{2}(q+1)$ isomorphism classes.

The argument for the groups $\widehat{M}_u$ is similar, 
using the fact that the element $TA^u B$ of order $2q$ acts with eigenvalues
$-g^{u+1}$, $-g^u$.

Now let $\theta \in \Aut(M_u,M_u')$. Again, 
$\theta$ must either fix or swap the two eigenspaces $\F_p \be_1$,
$\F_p \be_2$, and must also fix the stabiliser $\F_p \ff$ of the
identity element of $N$. Hence
$$ \theta(\be_i) = x \be_{\pi(i)} \mbox{ for }i=1, 2, $$
where $x \in \F_p^\times$ and $\pi$ is either the identity map or the
transposition on $\{1,2\}$. Moreover $\theta(TA^u)=[\bv,
  (TA^u)^c]$ for some $\bv \in P$ and some $c \neq 0$.  If $\pi$ is the identity map, the
commutation relations for $\be_1$ and $\be_2$ reduce to
$g^{(u+1)c}=g^{u+1}$, $g^{uc}=g^u$, so that $c=1$ and we obtain no
restriction on $\bv$.  If $\pi$ is the transposition, we obtain
$g^{(u+1)c}=g^u$, $g^{uc}=g^{u+1}$, so that $c=-1$ and $u=(q-1)/2$,
again with no restriction on $\bv$.  Thus we have
$$ |\Aut(M_u,M_u')| = \begin{cases} p^2 (p-1) & \mbox{if } u \neq \frac{1}{2}(q-1), \\
       2p^2 (p-1) & \mbox{if } u = \frac{1}{2}(q-1). \end{cases} $$
       
Finally, let $\theta \in \Aut(\widehat{M}_u,\widehat{M}_u')$. Arguing as above, we have
$\theta(TA^u B)=[\bv,(TA^u B)^c]$, where $c= \pm 1$ if $u=-(q-1)/2$ and $c=1$ otherwise. 
However, since $(TA^u B)^q=B$ lies in the stabiliser $\langle  \ff, B \rangle$ of the identity element of $N$, we have the additional constraint that $[\bv,(TA^u B)^c]^q$ also lies in this group. Writing $S=(TA^u B)^c$, this means that $(I+S+ \cdots+S^{q-1}) \bv \in \F_p \ff$. Now $S$ is a diagonal matrix whose diagonal entries have order $2q$ or $2$ in $\F_p^\times$. It follows that the matrix $I+S+ \cdots + S^{q-1}$ is invertible, so that there are only $p$ possibilities for $\bv$. Hence 
$|\Aut(\widehat{M}_u,\widehat{M}_u')|= p^{-1} |\Aut(M_u,M_u')|$. 
\end{proof}

\begin{remark} \label{Mu}
We describe in more detail the structure of the groups $M_u$ and $\widehat{M}_u$, all of which 
were denoted rather loosely in Table \ref{metab-trans-subgroups} by $\F_p^2 \rtimes C_q$ or 
 $\F_p^2 \rtimes C_{2q}$. We need only consider 
$0 \leq u \leq \frac{1}{2}(q-1)$. When $u=0$, the generator $TA^u=T$ commutes with $\be_2$, so 
$M_0 \cong C_p \times (C_p \rtimes C_q)$. Thus $M_0$ contains a normal subgroup of order $pq$, which is a 
complement to $M_0'=\F_p \ff$. Similarly, the group $\widehat{M}_0$
contains a normal complement to $\widehat{M}_0'$. When $1 \leq u \leq \frac{1}{2}(q-1)$, the generator $TA^u$ 
acts on $P$ with the two distinct eigenvalues $g^u$, $g^{u+1}$ of order $q$. (In the exceptional case $u=\frac{1}{2}(q-1)$,
these eigenvalues are mutually inverse.) A normal complement in $M_u$ to $M_u'=\F_p \ff$ would be a transitive subgroup
of order $pq$, and since $u\neq 0$, $q-1$, it would be as in Proposition \ref{mc-pq}(i) or (iv). However, none of these groups 
is normalised by $P$. Hence $M_u'$ does not have a normal complement in $M_u$. A similar argument applies to $\widehat{M}_u$.
\end{remark}

For $1 \leq u \leq \frac{1}{2}(q-1)$, we denote the isomorphism classes of the groups $M_u$ and $\widehat{M}_u$ by $\F_p^2\rtimes_u C_q$ 
and $\F_p^2\rtimes_u C_{2q}$.

\begin{proposition} \label{Aut-pq2}
The $2p$ groups $M$ of order $pq^2$ in Proposition \ref{mc-pq2} are
are all isomorphic as permutation groups, and 
$|\Aut(M,M')|=(p-1)(q-1)$ for these groups.

The same holds for the $2p$ groups of order $2pq^2$ in Proposition \ref{mc-2pq2}.
\end{proposition}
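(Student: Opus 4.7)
My plan is to handle the $pq^2$ case in three steps — collapsing each of the two $p$-parameter families, bridging them via an abstract isomorphism, and computing $|\Aut(M,M')|$ — and then observe that each step extends essentially verbatim to the $2pq^2$ case by carrying along the involution $B$.

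For the first step, I conjugate the generators of $M^{(1)}_\mu = \langle \be_1, T, [\mu \be_2, A]\rangle$ by $[\nu \ff, I]$, which lies in the stabiliser $\Aut(N)$ of $1_N$. A direct calculation in $P \rtimes R$ shows that $\be_1$ is fixed, $T$ goes to $[(1-g)\nu \be_1, T]$, and $[\mu \be_2, A]$ goes to $[(1-g)\nu \be_1 + (\mu - (1-g)\nu)\be_2, A]$, which together generate $M^{(1)}_{\mu'}$ with $\mu' = \mu - (1-g)\nu$. Since $1-g \in \F_p^\times$, every $\mu'$ is attained, and the same recipe collapses family (ii). For the second step, at $\mu = 0$ the groups become $M^{(1)}_0 = \langle \be_1, T, A\rangle$ and $M^{(2)}_0 = \langle \be_2, T, A\rangle$, both with stabiliser $\langle A\rangle$. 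The assignment $\be_1 \mapsto \be_2$, $T \mapsto T^{-1}A$, $A \mapsto A$ extends to an abstract isomorphism $\phi: M^{(1)}_0 \to M^{(2)}_0$: the relation $T \be_1 T^{-1} = g \be_1$ is preserved because $T$ acts trivially on $\be_2$ and $A$ acts as $g$, so $T^{-1}A$ acts as $g$, matching the uniform action of $T$ and $A$ on $\be_1$. Since $\phi$ fixes $\langle A\rangle$ it is a permutation group isomorphism, and combined with the first step this shows all $2p$ groups are mutually isomorphic as permutation groups.

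For $|\Aut(M,M')|$, I exploit the central direct product decomposition $M := M^{(1)}_0 = H \times K$ with $H = \langle \be_1, T\rangle \cong C_p \rtimes C_q$ and $K = \langle TA^{-1}\rangle \cong C_q$ (the centre). In these coordinates the stabiliser $M' = \langle A\rangle$ becomes the anti-diagonal $\{(T^c, (TA^{-1})^{-c}) : c \in \F_q\}$. Because $Z(H) = 1$ and $K$ is abelian, every element of $\Aut(H \times K)$ has the form $\phi(h,k) = (a(h), c(h)d(k))$ with $a \in \Aut(H)$, $d \in \Aut(K)$, and $c: H \to K$ a homomorphism factoring through $H^{\mathrm{ab}} \cong C_q$. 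Writing $a(\be_1) = \be_1^x$, $a(T) = \be_1^y T$ (the $T$-exponent being forced to $1$ by the defining relation of $H$), $d(k) = k^w$, and $c(T) = k^\gamma$, the condition $\phi(M') = M'$ forces $y = 0$ and $w = \gamma + 1$; the remaining parameters $x \in \F_p^\times$ and $\gamma \in \F_q \setminus \{-1\}$ yield the count $(p-1)(q-1)$.

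For the $2pq^2$ groups, each step extends upon adjoining $B$. In step one, conjugation by $[\nu \ff, I]$ correctly maps the extra generator $[2(1-g)^{-1}\mu \be_2, B]$ to its counterpart with $\mu$ replaced by $\mu'$, since the scalar $2(1-g)^{-1}$ in Proposition \ref{mc-2pq2} is precisely designed to intertwine with $B$. In step two, the abstract isomorphism extends by $B \mapsto B$, as $B$ commutes with $T, A$ and inverts both $\be_i$. In step three, we replace $H$ by $H' = \langle \be_1, T, B\rangle \cong C_p \rtimes C_{2q}$, and the count is unchanged because $(H')^{\mathrm{ab}} \cong C_{2q}$ but $\Hom(C_{2q}, C_q) \cong \Hom(C_q, C_q) \cong C_q$ (the $2$-torsion contributing nothing as $q$ is odd). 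The most delicate point throughout is step two: the roles of $T$ and $A$ genuinely differ between the two families (in family (ii) the element $T$ is central), so the naive swap $\be_1 \leftrightarrow \be_2$ fails and the corrective $T \mapsto T^{-1}A$ is essential.
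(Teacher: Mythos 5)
Your proof is correct, but it takes a genuinely different route from the paper on both halves of the statement. For the isomorphism claim, the paper argues abstractly: there is a single isomorphism class $C_q \times (C_p \rtimes C_q)$, and its $pq$ non-normal subgroups of order $q$ form one orbit under the full automorphism group, so all pairs $(M,M')$ are equivalent at once. Your argument is constructive: conjugating by the stabiliser elements $[\nu\ff,I]$ shows the $p$ groups within each family are actually conjugate in $\Hol(N)$ (slightly stronger information than the paper records), and your explicit bridge $\be_1 \mapsto \be_2$, $T \mapsto T^{-1}A$, $A \mapsto A$ handles the cross-family identification; you correctly flag that the naive swap fails because $T$ is central in family (ii) but not in family (i). For the count, the paper works directly with the generators of $\langle \be_1, T, A\rangle$, deducing $\theta(\be_1)=x\be_1$, $\theta(A)=A$, $\theta(T)=[0,T^cA^{1-c}]$, whereas you invoke the structure of $\Aut(H\times K)$ for $Z(H)=1$ and $K$ abelian; the two computations are essentially the same calculation in different coordinates (your $\gamma$ is the paper's $c-1$), but your framing makes transparent why the answer is $(p-1)(q-1)$ (namely $|\Aut(H)|/p$ times $|\Hom(H^{\mathrm{ab}},K)\setminus\{\gamma=-1\}|$) and why the $2pq^2$ case gives the same count ($\Hom(C_{2q},C_q)\cong C_q$). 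I checked the key computations --- the conjugation formulas, the compatibility of the scalar $2(1-g)^{-1}$ with the shift $\mu \mapsto \mu-(1-g)\nu$, and the constraint $w=\gamma+1$ --- and they are all right. The paper's orbit argument is shorter for the first half; your version buys explicitness and reusability of the conjugation trick.
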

\begin{proof}
There is only one isomorphism class of abstract groups of the form $G=C_q \times (C_p \rtimes C_q)$.
Such a group contains $pq$ non-normal subgroups of order $q$, and these form a single orbit
under $\Aut(G)$. Hence all the groups of order $pq^2$ in Proposition \ref{mc-pq2} are 
isomorphic as permutation groups. A similar argument applies to the groups of order $2pq^2$. 

To find $|\Aut(M,M')|$, we may therefore suppose $M=\langle \be_1, T, A \rangle$ of order $p q^2$
or $M=\langle \be_1, T, A, B\rangle$ of order $2p q^2$.
In the first case, if $\theta \in \Aut(M,M')$ then $\theta(\be_1)=x
\be_1$ and $\theta(A)=A^y$ for some $x \in \F_p^\times$,
$y \in \F_q^\times$.  As $\theta(A) \theta(\be_1)
\theta(A)^{-1}=g\be_1$, we have $y=1$. Then 
$\theta(T)=[z \be_1, T^c A^d]$ with $z \in \F_p$, $c \in \F_q^\times$, $d
\in \F_q$. As $\theta(A)$ and $\theta(T)$ commute, we
have $gz=z$ and hence $z=0$. As $\theta(T) \theta(\be_1)
\theta(T)^{-1}=g\be_1$, we have $c+d=1$ in $\F_q$. Thus we
have $p-1$ choices for $x$ and $q-1$ choices for $c$, and these
choices determine $\theta$. Hence $|\Aut(M,M')|=(p-1)(q-1)$ for each
of the $2p$ groups of order $pq^2$.

Finally, if $M=\langle \be_1, T, A, B\rangle$ then the above automorphism $\theta$ on its subgroup of index $2$ extends to an element of $\Aut(M,M')$ 
by taking $\theta(B)=B$, and this extension is unique as $B$ is the
only element of order $2$ in $M'$.
\end{proof}

The groups of order $pq$ in Proposition \ref{mc-pq} are regular, so any two of them are 
isomorphic as permutation groups if they are isomorphic as abstract groups. We already know $\Aut(M)$ in these cases.

\begin{proposition} \label{Aut-2pq}
The $2p(q-1)$ groups in Proposition \ref{mc-2pq} of isomorphism type $C_ p \rtimes C_{2q}$ are all isomorphic as permutation groups. For these groups, 
$|\Aut(M,M')|=p-1$.

The $2p$ groups $M$ in Proposition \ref{mc-2pq} of isomorphism type $D_{2p}\times C_q$ are  
isomorphic as permutation groups. For these groups, $|\Aut(M,M')|=(p-1)(q-1)$.
\end{proposition}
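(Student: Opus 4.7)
The plan is to reduce both assertions to a single orbit-stabiliser calculation. Each $M$ in Proposition \ref{mc-2pq} has order $2pq$ with $|M'|=2$, and since $M'$ has trivial core, $M'$ is non-normal. I would first establish the key structural fact that, in either of the two abstract isomorphism types arising ($C_p \rtimes C_{2q}$ with faithful action, or $D_{2p} \times C_q$), the non-normal subgroups of order $2$ form a single conjugacy class under $M$ itself. This would immediately imply two things. First, given two groups $M_1, M_2$ of the same type in the list, any abstract isomorphism can be post-composed with an inner automorphism of $M_2$ to send $M_1'$ to $M_2'$, yielding an isomorphism of permutation groups. Second, applying orbit-stabiliser to the action of $\Aut(M)$ on the $p$ non-normal subgroups of order $2$,
$$ |\Aut(M,M')| = \frac{|\Aut(M)|}{p}. $$

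For $M \cong C_p \rtimes C_{2q}$, I would note that since $p-1=2q$, the faithful action of $C_{2q}$ on $C_p$ goes through the full group $\Aut(C_p)$, so $M \cong \Hol(C_p)$. Using that $C_p$ is characteristic and that the defining relation $\beta\sigma\beta^{-1}=\sigma^h$ (for $h$ a primitive root mod $p$) forces $\theta(\beta)=\sigma^a\beta$ for any automorphism $\theta$, a short count yields $|\Aut(M)|=p(p-1)$. The order-$2$ elements of $M$ are precisely the $p$ elements $\sigma^a\beta^q$ (since $h^q=-1$), and conjugation by $\sigma^j$ sends $\beta^q$ to $\sigma^{2j}\beta^q$, exhibiting the single conjugacy class. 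The formula above then gives $|\Aut(M,M')|=p-1$.

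For $M \cong D_{2p} \times C_q$, I would use that the factors have coprime orders (as $q \neq 2,p$) to write $\Aut(M) \cong \Aut(D_{2p}) \times \Aut(C_q)$. Combining the standard fact $\Aut(D_{2p}) \cong \Hol(C_p)$ of order $p(p-1)$ with $|\Aut(C_q)|=q-1$ gives $|\Aut(M)|=p(p-1)(q-1)$. Since $C_q$ has odd order, any element of $M$ of order $2$ must have trivial $C_q$-component, hence is a reflection in the $D_{2p}$ factor; the $p$ reflections are all conjugate. Division then gives $|\Aut(M,M')|=(p-1)(q-1)$.

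The argument is quite short once these structural facts are in place, and I do not expect a serious obstacle. The one step requiring a little care is verifying in the cyclic-by-cyclic case that $\theta(\beta)=\sigma^a\beta$ rather than some $\sigma^a\beta^j$ with $j \neq 1$; this reduces to observing that the relation $\beta\sigma\beta^{-1}=\sigma^h$ is transported under $\theta$ to $\sigma^{kh^j}=\sigma^{kh}$, forcing $h^{j-1}=1$ and hence $j=1$ by the order of $h$.
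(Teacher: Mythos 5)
Your proposal is correct and follows essentially the same route as the paper: both arguments rest on the facts that $M'$ has order $2$, that all order-$2$ subgroups of $M$ are conjugate (giving the permutation-group isomorphisms), and on counting the automorphisms fixing a chosen involution. Your explicit orbit–stabiliser computation $|\Aut(M,M')|=|\Aut(M)|/p$, with $|\Aut(M)|=p(p-1)$ for $\Hol(C_p)$ and $p(p-1)(q-1)$ for $D_{2p}\times C_q$, simply fills in the details the paper leaves implicit.
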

\begin{proof}
In both cases, the stabiliser of $1_N$ has order $2$, and
all subgroups of order $2$ in $M$ are conjugate. Hence the groups of the same isomorphism type
as abstract groups are isomorphic as permutation groups. Moreover, $C_p \rtimes C_{2q}$ (respectively, $D_{2p} \times C_q$) has 
$p-1$ (respectively, $(p-1)(q-1)$) automorphisms fixing a given element of order $2$. 
\end{proof}

The information in Propositions \ref{Aut-p2q2}--\ref{Aut-2pq} is summarised in Table \ref{metab-trans-HGS}, where we have also used Lemma \ref{count-formula} to find the number of Hopf-Galois structures in each case.

\begin{table}[h]
\centerline{ 
\begin{tabular}{|c|c|c|c|c|c|c|c|} \hline
  Order & Structure & $\#$ groups & $|\Aut(M,M')|$ & $\#$ HGS \\ \hline
  $p^2 q^2$ & $N \rtimes (C_p \rtimes C_q)$ & $1$ & $2p(p-1)$ & $2$ \\ \hline
  $2p^2 q^2$ & $\Hol(N)$ & $1$ & $2p(p-1)$ & $2$ \\ \hline
  $p^2 q$ & $C_p \times (C_p \rtimes C_q)$  & $2$ & $p^2(p-1)$ & $2p$ \\
  & $\F_p^2 \rtimes_u C_q$, $1 \leq u \leq \frac{1}{2}(q-3)$ & $2$ & $p^2(p-1)$ & $2p$ \\
   & $\F_p^2 \rtimes_{\frac{1}{2}(q-1)} C_q$ & $1$ & $2p^2(p-1)$ & $2p$ \\ \hline
  $2p^2 q$ & $(C_p \times (C_p \rtimes C_q))\rtimes C_2$ & $2$ & $p(p-1)$ & $2$ \\
   & $\F_p^2 \rtimes_u C_{2q}$, $1 \leq u \leq \frac{1}{2}(q-3)$ & $2$ & $p(p-1)$ & $2$ \\
  & $\F_p^2 \rtimes_{\frac{1}{2}(q-1)} C_{2q}$ & $1$ & $2p(p-1)$ & $2$ \\ \hline
  $pq^2$ & $C_q \times (C_p \rtimes C_q)$ & $2p$ & $(p-1)(q-1)$ & $2(q-1)$ \\ \hline
  $2pq^2$ & $C_q \times (C_p \rtimes C_{2q})$ & $2p$ & $(p-1)(q-1)$ & $2(q-1)$ \\ \hline
  $pq$ &  $C_p \rtimes C_q$ & $2p(q-2)+2$ & $p(p-1)$ & $2p(q-2)+2$ \\
   & $C_{pq}$ & $2p$ & $(p-1)(q-1)$ & $2(q-1)$ \\ \hline
  $2pq$ &  $C_p \rtimes C_{2q}$ & $2p(q-1)$ & $p-1$ & $2(q-1)$ \\
   &  $D_{2p} \times C_q$ & $2p$ & $(p-1)(q-1)$ & $2(q-1)$ \\
\hline
\end{tabular}
}  % end centerline
\vskip3mm

\caption{Hopf-Galois structures for $N$ metabelian} 
 \label{metab-trans-HGS}  	
\end{table}

We summarise our results for Hopf-Galois structures of non-abelian type in the following theorem.

\begin{theorem} \label{metab-summary}
There are in total $q+9$ isomorphism types of permutation groups $G$ of degree $pq$ which are realised
by Hopf-Galois structures of non-abelian type $C_p \rtimes C_q$, as listed in Table \ref{metab-trans-HGS}. These include the two regular groups, i.e.~the 
cyclic and non-abelian groups of order $pq$ (for which the corresponding Galois extensions have $2(q-1)$ and $2p(q-2)+2$ 
Hopf-Galois structures of non-abelian type respectively). For $q-1$ of these permutation groups (more precisely, for all but one of the groups of each of the orders $p^2 q$, $2p^2q$),
the corresponding field extensions fail to be almost classically Galois. In the remaining $10$ cases, the extensions are almost classically Galois.
\end{theorem}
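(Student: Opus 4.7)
The plan is to present this theorem as a bookkeeping summary of Table \ref{metab-trans-HGS} and Remark \ref{Mu}; essentially no new content is involved beyond arithmetic and an explicit verification of almost classicality.

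First I would count isomorphism classes row by row. Orders $p^2q^2$, $2p^2q^2$, $pq^2$ and $2pq^2$ each contribute one class, and orders $pq$ and $2pq$ each contribute two. At orders $p^2q$ and $2p^2q$, Proposition \ref{Aut-p2q} tells me that the $q$ subgroups $M_u$ (respectively $\widehat{M}_u$) collapse, under the pairing $u+v+1\equiv 0\pmod{q}$, into $(q-1)/2$ two-element orbits together with the fixed point $u=(q-1)/2$, giving $(q+1)/2$ classes each. Summing yields $q+9$. The two regular classes of order $pq$ are immediately recognised as $C_{pq}$ and $C_p\rtimes C_q$, with Hopf-Galois structure counts $2(q-1)$ and $2p(q-2)+2$ read directly from the last column of the table.

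The only nontrivial input is deciding which pairs $(M,M')$ are almost classically Galois, i.e.~for which the stabiliser $M'$ admits a normal complement in $M$. Remark \ref{Mu} supplies the key fact that among the groups $M_u$ and $\widehat{M}_u$ such a complement exists precisely when $u=0$. Hence at each of the orders $p^2q$ and $2p^2q$ exactly one of the $(q+1)/2$ classes (the one represented by $u=0$, of structure $C_p\times(C_p\rtimes C_q)$ or its extension by $C_2$) is almost classical, producing $q-1$ non-almost-classical classes in total. For every other row of Table \ref{metab-trans-HGS} I would write down an explicit normal complement from the given generators: $N$ itself in orders $p^2q^2$ and $2p^2q^2$; the normal regular subgroup $\langle\be_i,[\mu\be_{3-i},A]\rangle$ (extended by the appropriate involution in the $2pq^2$ case) in orders $pq^2$ and $2pq^2$; the whole group in the regular cases of order $pq$; and the unique index-$2$ subgroup of order $pq$ in the groups of order $2pq$. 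Subtracting leaves $10$ almost classically Galois classes, as claimed.

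The main obstacle in the overall argument is really the earlier analysis carried out in Remark \ref{Mu}, namely the verification that for $u\neq 0$ no regular subgroup of order $pq$ inside $M_u$ is normalised by $P$; once that is granted, everything else reduces to arithmetic and direct inspection of the generating sets tabulated above.
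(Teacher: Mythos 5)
Your overall strategy is the same as the paper's: the row-by-row count giving $q+9$, the identification of the two regular classes with their Hopf--Galois structure counts, and the appeal to Remark \ref{Mu} to isolate the $q-1$ non-almost-classical classes among the $M_u$ and $\widehat{M}_u$ are all correct and are exactly how the paper argues. The one genuine error is your explicit verification of almost classicality for the rows of orders $pq^2$ and $2pq^2$. The subgroup $\langle \be_i,[\mu\be_{3-i},A]\rangle$ that you offer as a normal regular complement is not regular; it is not even transitive, since its image under the projection $\Hol(N)\to R$ is $\langle A\rangle$, which lies inside $\Aut(N)$, so by (the proof of) Lemma \ref{metab-coarse} its orbit of $1_N$ has size only $p$. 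Concretely, for each $k$ the element $[-\tfrac{g^k-1}{g-1}\mu\,\ff,\,A^k]$ lies in your subgroup and stabilises $1_N$, so the subgroup meets $M'$ in a group of order $q$ and cannot complement it. Your parenthetical ``extended by the appropriate involution in the $2pq^2$ case'' is also impossible on order grounds: a normal complement to $M'$ must have order $|M|/|M'|=pq$, which is odd, so it contains no involution.

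The fix is straightforward and is what the paper means by ``clear from the structure of $G$.'' For the groups of Proposition \ref{mc-pq2}(i) and their extensions in Proposition \ref{mc-2pq2}(i), the subgroup $N=\langle\be_1,T\rangle$ is itself normal and regular: $T$ commutes with $[\mu\be_2,A]$ and with $[2(1-g)^{-1}\mu\be_2,B]$, while these elements conjugate $\be_1$ to $g\be_1$ and $-\be_1$ respectively. For the groups of type (ii), take $\langle\be_2,[\mu\be_1,T]\rangle$, which is one of the regular cyclic subgroups of Proposition \ref{mc-pq}(v) and is normal because $[\mu\be_1,T]$ is central and $\F_p\be_2=M\cap P$ is normal. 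With that correction the remaining bookkeeping, and hence the theorem, goes through as you describe.
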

\begin{proof}
Everything except the statements about almost classically Galois extensions follows from Table \ref{metab-trans-HGS}. For these,
see Remark \ref{Mu} for the groups of order $p^2 q$ and $2p^2 q$. In all the other cases, it is clear from the structure of $G$ that 
$G$ contains a regular normal subgroup.
\end{proof}

\begin{remark}
In general, any Hopf-Galois structure of non-abelian type has an ``opposite'' Hopf-Galois structure of the same type; see \cite[Lemma 2.4.2]{GP}.
This explains why the number of Hopf-Galois structures in each row of Table \ref{metab-trans-HGS} is even. 
\end{remark}

\subsection{Comparing the two types}

We can read off from Tables \ref{cyclic-abst} and \ref{metab-trans-HGS} which permutation groups $G$
are realised by Hopf-Galois structures of both types. The number of Hopf-Galois structures of each type are shown in Tables \ref{cyclic-HGS} and \ref{metab-trans-HGS}. This gives our final result. 

\begin{theorem}
There are six permutation groups $G$ of degree $n$ which are realised by Hopf-Galois structures of both cyclic and non-abelian types. These are 
as shown in Table \ref{both-types}. In all these cases, the corresponding field extensions are almost classically Galois.
\end{theorem}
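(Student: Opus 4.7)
The plan is to cross-reference Table \ref{cyclic-abst} against Table \ref{metab-trans-HGS}, identifying the permutation groups listed in both, and then verifying each match is genuinely an isomorphism of permutation groups (not merely of abstract groups). The almost classically Galois conclusion will then follow from the remarks already in place for each table.

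My first reduction uses that $|\Hol(C_{pq})| = pq(p-1)(q-1) = 2^{r+1} p q^2 s$ is not divisible by $p^2$: by Lemma \ref{count-formula}, any $G$ realised by a cyclic-type structure embeds in $\Hol(C_{pq})$, and so $|G|$ is not divisible by $p^2$. This immediately eliminates the rows of Table \ref{metab-trans-HGS} of orders $p^2 q$, $2p^2 q$, $p^2 q^2$ and $2p^2 q^2$. Reading the abstract isomorphism types in each remaining row and locating them in Table \ref{cyclic-abst}, I expect exactly six matches: $C_{pq}$ from case (E) with $(c,d)=(0,1)$ and $C_p \rtimes C_q$ from case (H) at order $pq$; $D_{2p} \times C_q$ from case (D) with $(c,d)=(0,1)$ and $C_p \rtimes C_{2q}$ from case (G) at order $2pq$; $C_q \times (C_p \rtimes C_q)$ from case (B) with $(c,d)=(0,1)$ at order $pq^2$; and $C_q \times (C_p \rtimes C_{2q})$ from case (A) with $(c,d)=(0,1)$ at order $2pq^2$. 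The other two order-$2pq$ cyclic-type structures, $D_{2pq}$ and $C_p \times D_{2q}$, do not appear in the metabelian table, and no order-$pq^2$ or $2pq^2$ cyclic entry outside those listed survives the $(c,d)=(0,1)$ constraint that keeps the order purely of the form $pq^2$ or $2pq^2$.

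The subtle step is to upgrade each abstract match to an isomorphism of permutation groups. For the two regular groups of order $pq$ this is automatic, and for the two groups of order $2pq$ the stabiliser of $1_N$ has order $2$, so once the abstract isomorphism type is fixed, the permutation structure of degree $pq$ is determined by a conjugacy class of involutions with trivial core; in each case the explicit descriptions of Propositions \ref{mc-pq} and \ref{mc-2pq} on the metabelian side and of Table \ref{cyclic-trans-subgroups} on the cyclic side exhibit the same class. For the matches at orders $pq^2$ and $2pq^2$, the stabiliser of $1_N$ (of order $q$ and $2q$ respectively) must lie in the $C_q$ direct factor of the given structure, and there is only one conjugacy class of such subgroups with trivial core, so again the permutation action is determined.

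The almost classically Galois conclusion is then immediate from either side: the remark following Table \ref{cyclic-abst} records that every cyclic-type entry has its point-stabiliser complemented normally by $N$ or some $J_t$, and independently Theorem \ref{metab-summary} shows that the only non-almost-classically-Galois non-abelian-type entries sit at orders $p^2 q$ and $2p^2 q$, which our first reduction has already excluded. The main obstacle in this plan is the permutation-group versus abstract-group verification of the middle paragraph; the abstract matches are essentially mechanical, but justifying that the full permutation structures coincide requires matching the actual stabiliser subgroups inside each of the two ambient holomorphs.
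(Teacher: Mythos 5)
Your route is the paper's own: its proof of this theorem is literally to cross-reference Tables \ref{cyclic-abst} and \ref{metab-trans-HGS}, and your six matches, your elimination of the metabelian rows of order divisible by $p^2$ (the orders in Table \ref{cyclic-trans-subgroups} are all $p$-free beyond the first power, since $|\Hol(C_{pq})|=2^{r+1}pq^2s$), and your two-sided justification of the almost classically Galois claim all agree with what the paper does. You are also right that the genuine content is in upgrading abstract isomorphisms to permutation-group isomorphisms, and your treatment of the regular case and of the order-$2pq$ case (stabiliser of order $2$, all involutions conjugate by Sylow, none normal) is correct.

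The flaw is in your justification at orders $pq^2$ and $2pq^2$. You say the stabiliser of $1_N$ ``must lie in the $C_q$ direct factor'': but that factor is the centre of $C_q\times(C_p\rtimes C_q)$, so a stabiliser contained in it would be normal and have nontrivial core, contradicting faithfulness of the coset action (and at order $2pq^2$ a stabiliser of order $2q$ cannot fit inside a $C_q$ at all). The stabiliser is a \emph{non-normal} subgroup of order $q$ (resp.\ $2q$). You then assert there is only one conjugacy class of such subgroups; this is false. In $G=C_q\times(C_p\rtimes C_q)$ there are $pq$ non-normal subgroups of order $q$, each with normaliser equal to its Sylow $q$-subgroup of order $q^2$, hence each has exactly $p$ conjugates, giving $q$ distinct conjugacy classes. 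What is true, and is also the right criterion (the coset actions $G/H_1$ and $G/H_2$ are isomorphic as permutation groups if and only if $H_1$ and $H_2$ lie in the same $\Aut(G)$-orbit, not the same conjugacy class), is that these $pq$ subgroups form a single orbit under $\Aut(G)$; that is exactly what Proposition \ref{Aut-pq2} establishes, and substituting it closes your argument. A smaller quibble: at order $2pq^2$ the cyclic table also contains $(C_p\rtimes C_q)\times D_{2q}$ and $C_{pq}\rtimes C_{2q}$, which are not excluded by your ``$(c,d)=(0,1)$ constraint'' but simply fail to match the unique metabelian entry of that order as abstract groups (Lemma \ref{cyclic-isoms}).
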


The last two rows of Table \ref{both-types} recover the main results of \cite{pq}.

\begin{table}[h]
\centerline{ 
\begin{tabular}{|c|c|c|c|} \hline
  Order & Structure & $\#$ cyclic type HGS & $\#$ non-abelian type HGS \\ \hline
   $2pq^2$ & $C_q \times (C_p \rtimes C_{2q})$ & $1$ & $2(q-1)$ \\
     $pq^2$ & $C_q \times (C_p \rtimes C_q)$ & $1$ & $2(q-1)$ \\
    $2pq$ &  $C_p \rtimes C_{2q}$ & $1$ & $2(q-1)$ \\
    $2pq$ &  $D_{2p} \times C_q$ & $1$ & $2(q-1)$ \\
     $pq$ &  $C_p \rtimes C_q$ & $p$ & $2p(q-2)+2$ \\
  $pq$ & $C_{pq}$ & $1$ & $2(q-1)$ \\ \hline
\end{tabular}
}  % end centerline
\vskip3mm

\caption{Groups admitting Hopf-Galois structures of both types} 
 \label{both-types}  	
\end{table}

 \section{Concluding Comments}
 
 We have obtained some general results on the permutation groups of squarefree degree $n$
 which are realised by Hopf-Galois structures, and we have listed all such groups in the case that
 $n=pq$ where $q$ is a Sophie Germain prime and $p$ is the associated safeprime. We have also determined
 those $G$ realised by Hopf-Galois structures of both possible types. In this special case,
 the field extensions admitting a Hopf-Galois structure of cyclic type are always almost classically Galois,
 whereas (for large $q$) most of those admitting a Hopf-Galois structure of non-abelian type are not.
 Moreover, we found no cases where two distinct permutation groups, both realised by Hopf-Galois structures, had the same underlying abstract group.
 It would be interesting to investigate whether similar behaviour occurs for more general squarefree degrees $n$. 
 
 \section*{Acknowledgement}
 Much of the work on this paper was done during the summer of 2019, when the second-named author held London
   Mathematical Society Undergraduate Research Bursary URB-18-19-17.
     We thank the London Mathematical Society and the Department of Mathematics 
   of the University of Exeter for supporting this bursary.                        

\bibliography{HGS-SG}

\providecommand{\bysame}{\leavevmode\hbox to3em{\hrulefill}\thinspace}
\providecommand{\MR}{\relax\ifhmode\unskip\space\fi MR }
% \MRhref is called by the amsart/book/proc definition of \MR.
\providecommand{\MRhref}[2]{%
  \href{http://www.ams.org/mathscinet-getitem?mr=#1}{#2}
}
\providecommand{\href}[2]{#2}
\begin{thebibliography}{CRV16}

\bibitem[AB18]{AB-cyc}
Ali~A. Alabdali and Nigel~P. Byott, \emph{Counting {H}opf-{G}alois structures
  on cyclic field extensions of squarefree degree}, J. Algebra \textbf{493}
  (2018), 1--19. \MR{3715201}

\bibitem[AB20]{AB-gal}
\bysame, \emph{Hopf-{G}alois structures of squarefree degree}, J. Algebra
  \textbf{559} (2020), 58--86. \MR{4093704}

\bibitem[Bac16]{bachiller}
David Bachiller, \emph{Counterexample to a conjecture about braces}, J. Algebra
  \textbf{453} (2016), 160--176. \MR{3465351}

\bibitem[BC12]{BC}
Nigel~P. Byott and Lindsay~N. Childs, \emph{Fixed-point free pairs of
  homomorphisms and nonabelian {H}opf-{G}alois structures}, New York J. Math.
  \textbf{18} (2012), 707--731. \MR{2991421}

\bibitem[Byo96]{unique}
N.~P. Byott, \emph{Uniqueness of {H}opf {G}alois structure for separable field
  extensions}, Comm. Algebra \textbf{24} (1996), no.~10, 3217--3228.
  \MR{1402555}

\bibitem[Byo04]{pq}
Nigel~P. Byott, \emph{Hopf-{G}alois structures on {G}alois field extensions of
  degree {$pq$}}, J. Pure Appl. Algebra \textbf{188} (2004), no.~1-3, 45--57.
  \MR{2030805}

\bibitem[Chi89]{Ch89}
Lindsay~N. Childs, \emph{On the {H}opf {G}alois theory for separable field
  extensions}, Comm. Algebra \textbf{17} (1989), no.~4, 809--825. \MR{990979}

\bibitem[Chi00]{Ch00}
\bysame, \emph{Taming wild extensions: {H}opf algebras and local {G}alois
  module theory}, Mathematical Surveys and Monographs, vol.~80, American
  Mathematical Society, Providence, RI, 2000. \MR{1767499}

\bibitem[Chi03]{Ch03}
\bysame, \emph{On {H}opf {G}alois structures and complete groups}, New York J.
  Math. \textbf{9} (2003), 99--115. \MR{2016184}

\bibitem[Chi05]{Ch05}
\bysame, \emph{Elementary abelian {H}opf {G}alois structures and polynomial
  formal groups}, J. Algebra \textbf{283} (2005), no.~1, 292--316. \MR{2102084}

\bibitem[CRV16]{CRV}
Teresa Crespo, Anna Rio, and Montserrat Vela, \emph{Induced {H}opf {G}alois
  structures}, J. Algebra \textbf{457} (2016), 312--322. \MR{3490084}

\bibitem[CS69]{CS}
Stephen~U. Chase and Moss~E. Sweedler, \emph{Hopf algebras and {G}alois
  theory}, Lecture Notes in Mathematics, Vol. 97, Springer-Verlag, Berlin-New
  York, 1969. \MR{0260724}

\bibitem[CS20]{Crespo-Salguero}
Teresa Crespo and Marta Salguero, \emph{Computation of {H}opf {G}alois
  structures on low degree separable extensions and classification of those for
  degrees {$p^2$} and {$2p$}}, Publ. Mat. \textbf{64} (2020), no.~1, 121--141.
  \MR{4047559}

\bibitem[GP87]{GP}
Cornelius Greither and Bodo Pareigis, \emph{Hopf {G}alois theory for separable
  field extensions}, J. Algebra \textbf{106} (1987), no.~1, 239--258.
  \MR{878476}

\bibitem[Koh98]{kohl}
Timothy Kohl, \emph{Classification of the {H}opf {G}alois structures on prime
  power radical extensions}, J. Algebra \textbf{207} (1998), no.~2, 525--546.
  \MR{1644203}

\bibitem[Koh13]{kohl13}
\bysame, \emph{Regular permutation groups of order {$mp$} and {H}opf {G}alois
  structures}, Algebra Number Theory \textbf{7} (2013), no.~9, 2203--2240.
  \MR{3152012}

\bibitem[MM84]{MM}
M.~Ram Murty and V.~Kumar Murty, \emph{On groups of squarefree order}, Math.
  Ann. \textbf{267} (1984), no.~3, 299--309. \MR{738255}

\bibitem[Rob96]{Robinson}
Derek J.~S. Robinson, \emph{A course in the theory of groups}, second ed.,
  Graduate Texts in Mathematics, vol.~80, Springer-Verlag, New York, 1996.
  \MR{1357169}

\bibitem[SV18]{SV}
Agata Smoktunowicz and Leandro Vendramin, \emph{On skew braces (with an
  appendix by {N}. {B}yott and {L}. {V}endramin)}, J. Comb. Algebra \textbf{2}
  (2018), no.~1, 47--86. \MR{3763907}

\bibitem[Tsa20]{Tsang_AS}
Cindy Tsang, \emph{Hopf-{G}alois structures on finite extensions with almost
  simple {G}alois group}, J. Number Theory \textbf{214} (2020), 286--311.
  \MR{4105712}

\end{thebibliography}

\end{document}